\newcommand{\keyword}[1]{\textbf{#1}}
\newcommand{\innerprod}[3]{\langle #1,#2 \rangle_{#3}}
\newcommand{\pinnerprod}[3]{\big( #1,#2 \big)_{#3}}
\newcommand{\tangentbd}[1]{\mathrm{T}#1}
\newcommand{\TM}[1]{\mathrm{T}_{#1} M}
\newcommand{\TMC}[1]{\mathrm{T}_{#1}M^{\mathbb{C}}}
\newcommand{\TstarM}[1]{\mathrm{T}_{#1}^{*} M}
\newcommand{\g}{\mathfrak{g}}
\newcommand{\gC}{\mathfrak{g}^{\mathbb{C}}}
\newcommand{\gk}[1]{\mathfrak{g}^{#1}}
\newcommand{\gpq}[2]{\mathfrak{g}^{#1,#2}}
\newcommand{\gstar}{\mathfrak{g}^{*}}
\newcommand{\gstarC}{\mathfrak{g}^{*\mathbb{C}}}
\newcommand{\gstarpq}[2]{{\mathfrak{g}^*}^{#1,#2}}
\newcommand{\secsp}[2]{\Gamma(#1,#2)}
\newcommand{\smoothfuncsp}[1]{C^{\infty}(#1)}
\newcommand{\Ak}[1]{\mathcal{A}^{#1}_{\mathfrak{g}}}
\newcommand{\Apq}[2]{\mathcal{A}^{#1,#2}_{\mathfrak{g}}}
\newcommand{\cohomo}[2]{\mathrm{H}^{#1}_{#2}}
\newcommand{\diff}{\mathrm{d}}
\newcommand{\LieagbdDiff}{\mathrm{d}_{\mathfrak{g}}}
\newcommand{\LiegpDiff}{\mathrm{d}_{G}}
\newcommand{\KoszulDiff}{\mathrm{d}_{\nabla}}
\newcommand{\LieagbdConn}[2]{\prescript{\mathfrak{g}}{}{\nabla^{#1}_{#2}}}
\newcommand{\LieagbdChernConn}[2]{\prescript{\mathfrak{g}}{}{\nabla^{#1}_{#2}}}
\newcommand{\LieagbdLpls}[2]{\prescript{\mathfrak{g}}{}{\Delta^{#1}_{#2}}}
\newcommand{\LieagbdPartial}{\partial_{\mathfrak{g}}}
\newcommand{\partialbar}{\bar{\partial}}
\newcommand{\LieagbdPartialbar}{\bar{\partial}_{\mathfrak{g}}}
\newcommand{\C}{\mathbb{C}}
\newcommand{\R}{\mathbb{R}}
\newcommand{\rank}{\mathrm{rank}}
\newcommand{\kernel}{\mathrm{ker}}
\newcommand{\Sym}{\mathrm{Sym}}
\newcommand{\anchor}{\rho}
\newcommand{\End}{\mathrm{End}}
\newcommand{\Hom}{\mathrm{Hom}}
\newcommand{\Tr}{\mathrm{Tr}}
\newcommand{\Ind}{\mathrm{Ind}}
\newcommand{\Th}{\mathrm{Th}}
\newcommand{\T}{\mathrm{T}}
\newcommand{\chg}[1]{\mathrm{ch}^{#1}}
\newcommand{\cg}[1]{\mathrm{c}^{#1}}
\newcommand{\Tdg}[1]{\mathrm{Td}^{#1}}
\newcommand{\euler}[1]{\mathrm{eu}^{#1}}
\newcommand{\unitarygp}[1]{\mathrm{U}(#1)}
\newcommand{\unitaryLalgb}[1]{\mathfrak{u}(#1)}
\newcommand{\basexi}[1]{\xi_{#1}}
\newcommand{\basev}[1]{v_{#1}}
\newcommand{\basedualxi}[1]{\xi^{'}_{#1}}
\newcommand{\basedualv}[1]{v^{'}_{#1}}
\newcommand{\indexi}[1]{i_{#1}}
\newcommand{\order}[1]{\mathrm{order}\big( #1 \big)}
\newcommand{\sgn}[2]{\mathrm{sgn} \Big(\begin{smallmatrix} #1 \\#2\end{smallmatrix}\Big)}
\newcommand{\leafat}[1]{l^{#1}}
\newcommand{\framebd}[1]{\mathrm{F}#1}
\newcommand{\framebdat}[2]{\mathrm{F}_{#2}#1}
\newcommand{\dualframebd}[1]{\mathrm{F}^{*}#1}
\newcommand{\dualframebdC}[1]{\mathrm{F}^{*}#1^{\mathbb{C}}}
\newcommand{\tgsecsp}[2]{\Gamma_{\tau}(#1,#2)}
\newcommand{\tgsmoothfuncsp}[1]{C^{\infty}_{\tau}(#1)}
\newcommand{\tgdiff}{\mathrm{d}_{\tau}}
\newcommand{\tgAk}[1]{\mathcal{A}^{#1}_{\tau}}
\newcommand{\tgcohomo}[1]{\mathrm{H}^{#1}_{\tau}}
\newcommand{\tgApq}[2]{\mathcal{A}^{#1,#2}_{\tau}}
\newcommand{\tgpartial}{\partial_{\tau}}
\newcommand{\tgpartialbar}{\bar{\partial}_{\tau}}
\newcommand{\tgconn}[2]{\prescript{\tau}{}{\nabla_{#2}#1}}
\newcommand{\tgcurvature}{R}
\newcommand{\tgomega}{\prescript{\tau}{}{\omega}}
\newcommand{\tgcherncharact}[1]{\mathrm{ch}^{\tau}(#1)}
\newcommand{\tgcherncharactBiggp}[1]{\mathrm{ch}^{\tau}\bigg(#1\bigg)}
\newcommand{\tgchernclass}[1]{\mathrm{c}^{\tau}(#1)}
\newcommand{\tgdegreechernclass}[2]{\mathrm{c}^{\tau}_{#2}(#1)}
\newcommand{\tgTdclass}[1]{\mathrm{Td}^{\tau}(#1)}
\newcommand{\tgEulerclass}[1]{\mathrm{eu}^{\tau}(#1)}
\newcommand{\fungroup}[1]{\pi_1(#1)}
\newcommand{\btgbd}[1]{\prescript{b}{}{\mathrm{T}#1}}
\newcommand{\bApq}[2]{\mathcal{A}^{#1,#2}_{b}}
\newcommand{\bAk}[1]{\mathcal{A}^{#1}_{b}}
\newcommand{\bDiff}{\mathrm{d}_{b}}
\newcommand{\bPartial}{\partial_{b}}
\newcommand{\bPartialbar}{\bar{\partial_{b}}}
\newcommand{\bConn}[1]{\prescript{b}{}{\nabla_{#1}}}
\newtheorem{theorem}{Theorem}
\newtheorem*{theorem*}{Theorem}
\newtheorem{definition}{Definition}
\newtheorem{proposition}{Proposition}
\newtheorem*{proposition*}{Proposition}
\newtheorem{lemma}{Lemma}
\newtheorem{corollary}{Corollary}
\title{Lie groupoid Riemann-Roch-Hirzebruch theorem and applications}
\author{ \href{https://orcid.org/0000-0000-0000-0000}{
        \includegraphics[scale=0.06]{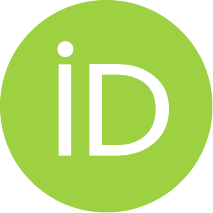}\hspace{1mm}Tengzhou~Hu}
        \thanks{} \\ %here is for footnote
	Department of mathematics\\
	Washington university in St. Louis\\
	Missouri, MO 63130 \\
	\texttt{thu24@wustl.edu} \\
	%% examples of more authors
	% \And
	% \href{https://orcid.org/0000-0000-0000-0000}{\includegraphics[scale=0.06]{orcid.pdf}\hspace{1mm}Elias D.~Striatum} \\
	% Department of Electrical Engineering\\
	% Mount-Sheikh University\\
	% Santa Narimana, Levand \\
	% \texttt{stariate@ee.mount-sheikh.edu} \\
	%% \AND
	%% Coauthor \\
	%% Affiliation \\
	%% Address \\
	%% \texttt{email} \\
	%% \And
	%% Coauthor \\
	%% Affiliation \\
	%% Address \\
	%% \texttt{email} \\
	%% \And
	%% Coauthor \\
	%% Affiliation \\
	%% Address \\
	%% \texttt{email} \\
}
\begin{document}
\maketitle

\keywords{Riemann-Roch formula \and K\"{a}hler foliated \and Index theory \and Lie algebroid \and Kodaira vanishing theorem}

\begin{abstract}
    A Lie algebroid is a generalization of Lie algebra that provides a general framework to describe the symmetries of a manifold. In this paper, we introduce Lie algebroid index theory and study the Lie algebroid Dolbeault operator. We also introduce Connes' index theory on regular foliated manifolds to obtain a generalized Riemann–Roch theorem on manifolds with regular foliation. We show that the topological side of Connes' index theory can be identified with the topological side of Lie algebroid index theory. Finally, we introduce Lie algebroid Kodaira vanishing theorem, and provide some applications and examples. The Lie algebroid Kodaira vanishing theorem can be used on the analytic side of Connes' theorem to attest to the criterion of a positive line bundle from its topological information. 
\end{abstract}

%%%%%%%%%%%%%%%%%%%%%%%%%%%%%%%%%%%%%%%%%%%%%%%%%%%%%%%%%%%%%%%%%%%%%%%%
%%%%%%%%%%%%%%%%%%%%%%%%%%%%%%%%%%%%%%%%%%%%%%%%%%%%%%%%%%%%%%%%%%%%%%%%
\section{Introduction}

Atiyah and Singer first proved the Atiyah–Singer index theorem \cite{AS1971} in 1963. The theorem states that for an elliptic differential operator on a compact manifold, its analytical index, which is the dimension of the kernel minus the dimension of the cokernel, is equal to the topological index, which is defined in terms of the topological invariants of the operator and the topological invariants of the underlying manifold. The theory is motivated by the study of geometry and topology as many problems in geometry often lead to integers that have a topological interpretation, like the Riemann–Roch–Hirzebruch formula and the $\hat{A}$-genus for a spin manifold. Through decades of development, the index theorem has been generalized in a multitude of directions. Connes and Skandalis generalized the index theorem to longitudinal operators on compact foliated manifolds without boundaries at the level of K-theory \cite{CS1984}. Connes also gave a cohomological version of the index theorem for foliations \cite{CO3}.

Pflaum, Posthuma, and Tang further advanced the generalization of the index theorem on Lie algebroids. In \cite{PPT2014}, they gave a cohomological formula for the index of invariant elliptic operators along the fibers of a Lie groupoid over a compact base manifold. The original Atiyah–Singer index theorem and Connes' longitudinal index theorem for foliation can be recovered from it. The topological side of this generalized index theorem is an integration over the dual of the Lie algebroid of cohomology classes of the pullback of the Lie algebroid. Since this is analogous to the cohomological formula of the Atiyah–Singer index theorem as an integral over the cotangent bundle of the original manifold, a natural further step is to generalize the Thom isomorphism for Lie algebroids. The thom isomorphism plays an important role in the classical Atiyah–Singer index theorem and provides a correspondence to the integral formula over the manifold itself. In \cite{PPT2015}, Pflaum, Posthuma, and Tang proved a version of the Thom isomorphism for Lie algebroids. They applied this Thom isomorphism to the index formula in \cite{PPT2014} and obtained an index formula whose topological side is formally the same as the classical topological side except that the characteristic classes take values in Lie algebroid cohomology instead of the de Rham cohomology. We cover their work in detail in Section \ref{sec:G-inv diff op and index thm}.

Building on Pflaum, Posthuma, and Tang's work, we consider their index formula on a complex Lie algebroid and apply it to the Lie algebroid Dolbeault operator. This generates an equality which is similar to the classical Riemann–Roch–Hirzebruch formula.
\begin{theorem*}
Let $\g\to M$ be an integrable complex Lie algebroid and $E\to M$ be a holomorphic vector bundle. If we set $\Phi_{\g}(v)=\Omega_{\g}\in\cohomo{0}{}(\g;L_{\g})$ on the analytic side, then the index formula for the Lie algebroid Dolbeault operator is
\begin{align*}
    \Ind_{\Omega_{\g}}
    (\partialbar+\partialbar^{*})
    =
    \frac{1}{(2\pi\sqrt{-1})^{k}}
    \int_{M}
    (-1)^{\rank(\g)}
    \bigg\langle
    \chg{\g}({\gstar}^{p,0})
    \chg{\g}(E)
    \Tdg{\g}_{\C}(\g)
    ,
    \Omega_{\g}
    \bigg\rangle.
\end{align*}
\end{theorem*}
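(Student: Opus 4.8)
The plan is to specialize the general Pflaum–Posthuma–Tang index formula (covered in Section~\ref{sec:G-inv diff op and index thm}) to the case where the invariant elliptic operator is the Lie algebroid Dolbeault operator $\LieagbdPartialbar + \LieagbdPartialbar^{*}$. Recall that their cohomological index formula expresses the higher index as an integration over the dual Lie algebroid of a product of characteristic classes of the pullback Lie algebroid $\pi^{!}\g$, paired against the chosen density on the analytic side; after applying the Lie algebroid Thom isomorphism one obtains an integral over $M$ itself whose integrand is formally the classical Atiyah–Singer integrand but with characteristic classes valued in Lie algebroid cohomology. So the task reduces to computing, for the specific symbol of the Dolbeault operator, what the topological integrand becomes.

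First I would identify the principal symbol of $\LieagbdPartialbar + \LieagbdPartialbar^{*}$ acting on $E$-valued $(0,q)$-forms and express its K-theory class, exactly as in the classical derivation of Riemann–Roch from Atiyah–Singer. The symbol of the Dolbeault complex is the Koszul complex built from the exterior algebra of $\gstarpq{0}{1}$, so its class in the relevant K-group is the alternating sum $\sum_q (-1)^q [\Lambda^q \gstarpq{0}{1}]$, which is the K-theoretic Euler class / Thom class of the anti-holomorphic subbundle. Under the Chern character this becomes, up to the Todd class that arises from the Thom isomorphism, the familiar ratio; the combinatorial input is the identity expressing $\chg{\g}$ of the Koszul symbol in terms of the top Chern class and the Todd class of $\g^{\C}$. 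Second, I would feed this symbol class into the PPT formula and push through the Thom isomorphism, so that the dual-Lie-algebroid integral collapses to an integral over $M$ of $\chg{\g}({\gstar}^{p,0}) \, \chg{\g}(E) \, \Tdg{\g}_{\C}(\g)$ paired with the density $\Omega_{\g}$.

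Third, I would track the normalization constants and signs with care: the prefactor $\frac{1}{(2\pi\sqrt{-1})^{k}}$ with $k$ the appropriate half-dimension, and the overall sign $(-1)^{\rank(\g)}$. These factors come from two sources, namely the conventions relating curvature to Chern forms (each Chern class contributing a power of $2\pi\sqrt{-1}$) and the orientation/sign conventions built into the Thom isomorphism and the passage from the Euler class of the symbol to the Todd class. Setting $\Phi_{\g}(v) = \Omega_{\g}$ on the analytic side fixes the pairing and identifies which cohomological degree survives integration.

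The main obstacle I expect is precisely the bookkeeping in this last step rather than any conceptual difficulty: matching the sign $(-1)^{\rank(\g)}$ and the factor of $(2\pi\sqrt{-1})^{k}$ against the conventions used for the Lie algebroid Chern character, Todd class, and Thom isomorphism as set up earlier. One must also verify that the Dolbeault symbol is genuinely elliptic in the Lie algebroid sense so that the PPT hypotheses apply, and that the formal adjoint $\LieagbdPartialbar^{*}$ is the one compatible with the density $\Omega_{\g}$ used to define the higher index. Once the symbol class is correctly identified and the Thom isomorphism applied, the remaining manipulation is the same multiplicative Chern-character computation as in the classical Hirzebruch–Riemann–Roch derivation, so I would present it as a direct specialization and relegate the constant-chasing to a verification of conventions.
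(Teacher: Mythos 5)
Your overall route coincides with the paper's: specialize the Pflaum--Posthuma--Tang index formula to $\LieagbdPartialbar+\LieagbdPartialbar^{*}$, taking for $E$ and $F$ the bundles ${\gstar}^{p,0}\otimes\bigwedge^{even}\gstarpq{0}{1}\otimes E$ and ${\gstar}^{p,0}\otimes\bigwedge^{odd}\gstarpq{0}{1}\otimes E$, and reduce everything to a Hirzebruch--Riemann--Roch-type characteristic class identity. One cosmetic difference: the paper starts from the already Thom-simplified formula, an integral over $M$ with $\euler{\g}(\gstar)$ in the denominator, so it never manipulates the integral over $\gstar$ nor redoes the Thom isomorphism, both of which you propose to carry out yourself; that part of your plan only re-derives what PPT already provide.

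There is, however, a genuine gap at the crux. You assert that once the symbol class is identified, ``the remaining manipulation is the same multiplicative Chern-character computation as in the classical Hirzebruch--Riemann--Roch derivation.'' This is precisely the step the paper flags as \emph{not} automatic: the classical identity
\begin{align*}
    \frac
        {\Tdg{}_{\C}(\TM{}\otimes\C)
        \wedge
        \big(\chg{}(\bigwedge^{even}\TM{})-\chg{}(\bigwedge^{odd}\TM{})\big)}
        {\euler{}(\TM{})}
    =
    (-1)^{\rank(\TM{})}\,
    \Tdg{}_{\C}(\TM{})
\end{align*}
is proved classically with tools --- the splitting principle over flag bundles, K-theoretic Euler and Thom classes --- that live in ordinary cohomology and topological K-theory and are not established for Lie algebroid cohomology $\cohomo{even}{}(\g)$; your K-theoretic description of the Koszul symbol and its Chern character has the same problem. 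The paper's resolution, which your proposal is missing, is to exploit the fact that Lie algebroid characteristic classes are produced by the Chern--Weil ring homomorphism $\C[\unitaryLalgb{n}]^{\unitarygp{n}}\to\cohomo{even}{}(\g)$: any identity of adjoint-invariant polynomials pushes forward to an identity of Lie algebroid classes. Concretely, one rewrites each ingredient as an invariant polynomial evaluated on the curvature of $\g$ itself --- $\prod_i(1-e^{\lambda_i})$ for the alternating sum of Chern characters of $\bigwedge^{\bullet}\gstarpq{0}{1}$, $\prod_i\frac{-\lambda_i}{1-e^{\lambda_i}}$ for the Todd class of the conjugate bundle, $\prod_i\lambda_i$ for the Euler class --- and verifies the resulting polynomial identity on diagonal matrices, from which the sign $(-1)^{\rank(\g)}$ drops out. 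Without this step (or some equivalent justification that the classical manipulation is valid in $\cohomo{even}{}(\g)$), your ``direct specialization'' does not go through; with it, your argument becomes the paper's proof.
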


The analytical index in the above theorem is called the higher index. It is defined by the canonical pairing between the K-theory of convolution algebra and the smooth groupoid cohomology with values in the bundle of transversal densities. Connes' index theorem on foliated manifolds \cite{MS1988} provides a good example of a Lie groupoid determined as the holonomy groupoid of a foliated manifold. Its analytic side is the integration of a measure which is determined by the kernel of the tangential elliptic differential operator, and its topological side agrees with the corresponding part in the Lie algebroid index theorem except for a constant coefficient $(2\pi)^{-k}$ difference, where $k$ is the real dimension of leaves. This builds a bridge connecting the analytic side of Connes' index formula to the topological side of the Lie algebroid index formula. In our paper, we are particular focus on Riemann-Roch formula on Lie algebroid and foliated manifold. The analytic index in Connes' index formula is determined by the kernel of a tangential elliptic operator. Therefore, the kernel of elliptic tangential operators has a canonical relation with the topological property of the base foliated manifold.

We particularly focus on tangential Dolbeault operator on foliated manifolds, which is a Lie algebroid elliptic operator on corresponding holonomy Lie algebroid as well. In {\cite{T2024}, Tengzhou shows the Lie algebroid version of Kodaira vanishing theorem holds on K\"{a}hler Lie algebroid. The theorem states the kernel of the Lie algebroid $\partialbar$-Laplician on Lie algebroid positive line bundle valued $(p,q)$ types vanishes for sufficient large $p+q$. Using this vanishing theorem and the bridge between analytical side and topological side in Riemann-Roch formulas for foliated manifolds and Lie algebroids, we can achieve a criterion of positivity for a tangential holomorphic line bundle over a foliated manifold.
\begin{proposition*}
Let $M$ be a K\"{a}hler foliated manifold with leaves of real dimension $2k$ and $E\to M$ be a tangential line bundle. Then, the line bundle $E$ is not positive if the integration of the characteristic classes
\begin{align*}
    (-1)^{k}
    \tgcherncharact{\dualframebd{M}^{(n,0)}}\,
    \tgcherncharact{E}\,
    \tgTdclass{M}
\end{align*}
is negative.
\end{proposition*}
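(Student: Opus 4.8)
The plan is to argue by contraposition: I assume $E$ is positive and show that the displayed integral is then forced to be non-negative, so that a negative value of the integral obstructs positivity. The two engines are the Lie algebroid Riemann--Roch--Hirzebruch theorem above, applied to the holonomy Lie algebroid of the foliation, and the Lie algebroid Kodaira vanishing theorem of \cite{T2024}.

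First I would translate the foliated data into Lie algebroid data. Via its holonomy groupoid $\HF$, a K\"ahler foliated manifold with leaves of real dimension $2k$ is governed by the holonomy Lie algebroid $\gF$, whose sections are the leafwise vector fields; equipped with the leafwise complex structure it is an integrable complex Lie algebroid of complex rank $k$, so that $n=k$ in the proposition. Under this dictionary the tangential classes $\tgcherncharact{\dualframebd{M}^{(n,0)}}$, $\tgcherncharact{E}$ and $\tgTdclass{M}$ are exactly the Lie algebroid classes $\chg{\g}(\gstarpq{k}{0})$, $\chg{\g}(E)$ and $\Tdg{\g}_{\C}(\gF)$, and since $\rank(\gF)=k$ the prefactor $(-1)^{k}$ agrees with $(-1)^{\rank(\g)}$. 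Hence the displayed integral is, up to the normalization $1/(2\pi\sqrt{-1})^{k}$ built into the theorem, precisely the topological side of the Lie algebroid Riemann--Roch--Hirzebruch formula for the Dolbeault complex on $\gF$ with the holomorphic degree fixed at the top value $p=k$; this top value is where the canonical bundle $\dualframebd{M}^{(k,0)}$ enters.

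Next I would apply the theorem to identify this topological side with the higher index $\Ind_{\Omega_{\g}}(\partialbar+\partialbar^{*})$ of the tangential Dolbeault operator on $E$-valued $(k,\bullet)$-forms, and then pass through the bridge to Connes' index formula to read this index analytically. On the foliated side it becomes the measured Euler characteristic $\sum_{q}(-1)^{q}\,\mu\big(\mathcal{H}^{k,q}(E)\big)$, where $\mathcal{H}^{k,q}(E)=\kernel\big(\LieagbdLpls{}{\LieagbdPartialbar}\big)$ on $\tgApq{k}{q}(M,E)$ is the space of leafwise $\partialbar$-harmonic $E$-valued $(k,q)$-forms and $\mu$ is the non-negative trace of the corresponding harmonic projection against the transversal density $\Omega$.

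Finally, assuming $E$ positive, the Lie algebroid Kodaira vanishing theorem of \cite{T2024} applies in the Kodaira--Nakano range $p+q>k$; with $p=k$ this gives $\mathcal{H}^{k,q}(E)=0$ for every $q>0$. Only the $q=0$ term then survives, so the measured Euler characteristic equals $\mu\big(\mathcal{H}^{k,0}(E)\big)\ge 0$ and the displayed integral is non-negative; contrapositively, a negative integral rules out positivity of $E$. I expect the main obstacle to be the measure-theoretic bookkeeping behind this last step: one must verify that leafwise harmonic vanishing suppresses the corresponding contribution to the measured index uniformly over the transversal, and that the constant $1/(2\pi\sqrt{-1})^{k}$ together with the sign $(-1)^{k}$ leaves a genuinely real and sign-faithful comparison between the integral and the non-negative index.
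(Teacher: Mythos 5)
Your proposal is correct and rests on the same two engines as the paper's own proof: (i) an index theorem identifying the displayed characteristic-class integral with the measured analytic index of the tangential Dolbeault operator at top holomorphic degree $p=n=k$, and (ii) the Kodaira vanishing theorem (Corollary \ref{cor-vanishign theorem for Dolbeault}), which under positivity kills every harmonic space $\mathcal{H}^{k,q}(E)$ with $q>0$, leaving only the non-negative $(k,0)$-harmonic contribution and hence a non-negative index, giving the contradiction. The only difference is routing. The paper stays entirely on the foliated side: it reads the hypothesis as saying that the topological side of Connes' Riemann--Roch formula for regular foliation (equation (\ref{EQ-Foliation and Connes index theorem-index formula for tangential dolbeault op}) with $p=n$) is negative, while the analytic side $\int\iota_{\tgpartialbar+\tgpartialbar^{*}}\,\diff v$ reduces by vanishing to the measure of the $(n,0)$-harmonic kernel, which is $\geq 0$, and Connes' theorem supplies the contradiction; no $2\pi$ factors ever appear. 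Your detour through the Lie algebroid Riemann--Roch theorem and the bridge (Proposition \ref{prop-Connes index theorem-topo side of lie agbd and foliation are same}) is logically sound but redundant: the bridge only identifies the two \emph{topological} sides, so to read the higher index analytically as the measured Euler characteristic $\sum_{q}(-1)^{q}\mu\big(\mathcal{H}^{k,q}(E)\big)$ you must still invoke Connes' theorem --- exactly the step the paper uses directly --- and your closing worry about whether $1/(2\pi\sqrt{-1})^{k}$ together with $(-1)^{k}$ yields a sign-faithful real comparison simply never arises on the paper's shorter route.
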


Our paper is organized as follows. 
\begin{itemize}
    \item In Section \ref{sec:preliminaries}, we firstly introduce the definition of complex Lie agebroids and some basic notations. Then, we introduce Lie algebroid characteristic classes and explain Lie algebroid version of Chern-Weil theory.

    \item In Section \ref{sec:index theory on LIe algebroids}, we first introduce Pflaum, Posthuma, and Tang's work, including the index theorem for equivariant vector bundles and the G-invariant differential operator in \cite{PPT2014}. We then introduce the Lie algebroid Thom class and the simplified version of the index theorem in \cite{PPT2015}. Finally, we focus on the application of the index theorem to the Lie algebroid Dolbeault operator. 

    \item Section \ref{sec:R-R formula for regular foliation} focuses on Connes' index theorem and the Riemann–Roch formula for regular foliation. We introduce the basic concepts necessary for foliation firstly. We then state Connes' index theorem and show its relation with Lie algebroid foliations.
    
    \item Section \ref{sec:app and exp} covers the application of index theory. At the begining, we introduce Lie algebroid Kodaira vanishing theorem. Then, we define a K\"{a}hler and positive line bundle through the terminology of foliation.  The application is focused on manifolds foliated by Riemann surfaces. We then generalize this application to a higher foliation dimension and give a criterion of positivity for a tangential holomorphic line bundle on a K\"{a}hler foliated manifold. An example of foliated K\"{a}hler manifolds is given at the end of this section. 
    
\end{itemize}

\section{Preliminaries}
\label{sec:preliminaries}
\subsection{Lie groupoids and K\"{a}hler Lie algebroids}
A geometric picture of a groupoid consists of two sets: one is the set of objects; the other is the set of arrows (morphisms). A morphism $f$ from $a$ to $b$ is said to have source $a$ and target $b$. In this paper, we always assume the arrow goes from left to right. This picture also explains how the partially defined product works: the product of two arrows is defined if the target of the left arrow is equal to the source of the right arrow, and the product of two such arrows is defined as a new arrow in the groupoid. The Lie groupoid is a groupoid that is also a differentiable manifold. We give its definition as follows.
\begin{definition}
   A \keyword{Lie groupoid} is a groupoid $G\rightrightarrows M$ where $G$ and $M$ are smooth manifolds. It contains the following statements:
    \begin{itemize}
        \item Two surjective submersions $s,t:G\to M$ which are called \keyword{source} and \keyword{target} maps, respectively.
        
        \item A smooth \keyword{multiplication} map $m:\{(g_1,g_2)\in G\times G|s(g_1)=t(g_2)\}\to G$ which will be simply denoted by $g_1\cdot g_2:=m(g_1,g_2)$. A pair such as $g_1,g_2$ is called composable. For a composable pair $g_1,g_2$, the multiplication satisfies $s(g_{1}\cdot g_{2})=s(g_{2})$, $t(g_{1}\cdot g_{2})=t(g_{1})$ and the associativity satisfies $(g_{1}\cdot g_{2})\cdot g_{3}=g_{1}\cdot (g_{2}\cdot g_{3})$.
    
        \item A smooth \keyword{unit map} $u:M\to G$ that satisfies $s\circ u(x)=t\,u(x)$ for any $x\in M$ and  $g\cdot \big(u\circ t(g)\big)=g=\big(u\circ s(g)\big) \cdot g$ for any $g\in G$.
    
        \item A smooth \keyword{inversion} map $i:G\to G$ that satisfies $i(g)\cdot g=u\circ t (g)$ and $g\cdot i(g)=u\circ s (g)$.
    \end{itemize} 
\end{definition}
We use $g^{-1}$ to denote $i(g)$ and $(G\rightrightarrows M,s,t,m,u,i)$ to denote a Lie groupoid. Each element in $G$ is similar to a right arrow going from $s(g)$ to $t(g)$, so naturally the domain of the multiplication map is a set of pairs of arrows where the target of the left arrow agrees with the source of the right arrow. By the assumption that $s$ and $t$ are surjective submersions, the set of composable arrows is a smooth manifold. In general, we consider
\begin{align*}
    G^{(k)}
    :=
    \{
    (g_1,\cdots,g_k)\in G^{\times k}
    |
    s(g_i)=t(g_{i+1}),
    i=1,\cdots,k-1
    \}
\end{align*}
for \keyword{the space of k-tuples of composable arrows}. For $k=0$, we define $G^{(0)}:=u(M)$.

Looking at Lie algebroids, we are mainly interested in the complex structure of a Lie algebroid. Therefore, we mainly follow the definition of a complex Lie algebroid by Weinstein in \cite{W2006} and associate this definition with the notion of almost complex Lie algebroids over almost complex manifolds \cite{IP2016}. The almost complex structure of a Lie algebroid is effectively the natural extension of an almost complex manifold when defining an endomorphism $J$ of the Lie algebroid. This is integrable if its Nijenhuis tensor of $J$ is zero.

\begin{definition}
 A \keyword{complex Lie algebroid} $\g\to M$ is a complex vector bundle over a smooth manifold $M$ with the following structures:
\begin{enumerate}
    \item A \keyword{Lie bracket} $
    [\bullet,\bullet]:
    \secsp{M}{\g}\times\secsp{M}{\g}\to\secsp{M}{\g}
    $, where $\secsp{M}{\g}$ is the space of sections of $\g$. We can naturally extend the Lie bracket to sections of $\g^{\C}$ by linearity.
    
    \item A vector bundle morphism $\rho: \g \to \TMC{}$ which is called an \keyword{anchor map}.
    
    \item \keyword{Leibniz rule} $[v_1,f v_2]=f[v_1,v_2]+ (\rho(v_1) f)v_2$ for all $v_1,v_2 \in \secsp{M}{\g}$ and $f \in \smoothfuncsp{M}$, where $\rho(v_1)f$ is the action of a vector field on a smooth function.
    
    \item The almost complex structure $J:\g \to \g$ from the complex vector bundle structure is integrable. That is saying the Nijenhuis tensor 
    \begin{align*}
        \mathcal{N}(\cdot,\cdot):
        \secsp{M}{\gC}\times\secsp{M}{\gC}
        \to
        \C
    \end{align*}
    such that
    $
        \mathcal{N}(\basev{1},\basev{2}):=
        [J\,\basev{1},J\,\basev{2}]
        -J[J\,\basev{1},\basev{2}]
        -J[\basev{1},J\,\basev{2}]
        -[\basev{1},\basev{2}]
    $ vanishes.
\end{enumerate}
\end{definition}

We use $\g$ to denote a complex Lie algebroid. Lie algebroids in this paper are all complex Lie algebroids unless specified otherwise. Furthermore, we define \textbf{the Lie algebroid} $\g$ \textbf{associated with a Lie groupoid} $G\rightrightarrows M$ to be the vertical bundle along s-fibers on $u(M)$,
\begin{align*}
    \g=\{\ker(s_{*})_{x}\}_{x\in u(M)}\to M,
\end{align*}
where we identify $u(M)\cong M$. The anchor map is the differential of $t$ restricted on the Lie algebroid and the Lie bracket is the natural vector field Lie bracket restricted on $\g$.

Analogous to the decomposition of the complexified tangent bundle $\TMC{}$ on a complex manifold, we decompose the complexified Lie algebroid into 
\begin{align*}
    \gC:=\g\otimes_{\R}\C=\gpq{1}{0}\oplus\gpq{0}{1},
\end{align*}
where $\gpq{1}{0}$ is the eigenspace of $J$ for the eigenvalue $\sqrt{-1}$ and $\gpq{0}{1}$ is the eigenspace of $J$ for the eigenvalue $-\sqrt{-1}$.

Some notations on differential manifolds can be generalized to K\"{a}hler Lie algebroid. We define the notations of the exterior algebra $\g^k:=\bigwedge^{k}\gC$ and $\gpq{p}{q}:=\bigwedge^{p}\gpq{1}{0} \otimes \bigwedge^{q}\gpq{0}{1}$. The natural properties of $\otimes$ and $\oplus$ in the exterior algebra give the equality $\gk{k}=\bigoplus_{p+q=k}\gpq{p}{q}$. A similar construction can be made on the dual of a complex Lie algebroid $\gstar$ as long as this gives a complex structure on $\gstar$ through the pullback of $J$. By using the decompositions $\gC=\gpq{1}{0}\oplus\gpq{0}{1}$ and $\gstarC=\gstarpq{1}{0}\oplus\gstarpq{0}{1}$, we define the conjugate map
\begin{align*}
    \bar{}:\gC  &\to\gC
    \\
    v+\sqrt{-1}J(v)&\mapsto v-\sqrt{-1}J(v)
    \\
    v-\sqrt{-1}J(v)&\mapsto v+\sqrt{-1}J(v)
\end{align*}
for $v\in\g$, and
\begin{align*}
    \bar{}:\gstarC  &\to\gstarC
    \\
    \xi+\sqrt{-1}J(\xi)&\mapsto \xi-\sqrt{-1}J(\xi)
    \\
    \xi-\sqrt{-1}J(\xi)&\mapsto \xi+\sqrt{-1}J(\xi)
\end{align*}
for $\xi\in\gstar$. The definition implies $\bar{v}=v$ for any $v\in\g\subset\gC$ and $\bar{\xi}=\xi$ for any $\xi\in\gstar\subset\gstarC$.

Let $\Ak{k}(M):=\secsp{M}{{\gstar}^{k}}$ be the space of \keyword{Lie algebroid $k$-forms}. Then, the Lie algebroid differential is defined as the generalization of the de Rham differential.
\begin{definition}
The Lie algebroid differential $\LieagbdDiff:\Ak{k}(M) \to \Ak{k+1}(M) $ is defined as
\begin{align*}
    \LieagbdDiff \phi(v_1,\cdots,v_{k+1})
    =
    \begin{pmatrix*}[l]
        \displaystyle\sum_{i=1}^{k+1}
        (-1)^{i}
        \rho(v_i) (\phi(v_1,\cdots,\widehat{v_i},\cdots,v_{k+1}))
        \\
        +
        \\
        \displaystyle\sum_{i<j}
        (-1)^{i+j-1}
        \phi([v_i,v_j],v_{1},\cdots,\widehat{v_i},\cdots,\widehat{v_j},\cdots,v_{k+1})
    \end{pmatrix*}.
\end{align*}
\end{definition}

The definition of $\LieagbdDiff$ is formally same as the definition of the classical de Rham differential, so its application to the exterior product of local frames is formally the same as the case for the classical de Rham. That is

\begin{align*}
    \LieagbdDiff 
    (f\,\zeta_{\indexi{1}}\wedge\cdots\wedge\zeta_{\indexi{k}})
    =
    (\LieagbdDiff f)
    \wedge\zeta_{\indexi{1}}\wedge\cdots\wedge\zeta_{\indexi{k}}
    +
    \displaystyle\sum_{j=1}^{k}
    (-1)^{j}
    f\, \zeta_{\indexi{1}}
    \wedge\cdots\wedge
    (\LieagbdDiff \zeta_{\indexi{j}})
    \wedge\cdots\wedge
    \zeta_{\indexi{k}}
    ,
\end{align*}
where $\{\zeta_{\indexi{}}\}$ is a local frame, and $f$ is a smooth function on $U$. Its definition can also naturally be generalized and applied to the Lie algebroid $(p,q)$-forms $\Apq{p}{q}(M):=\secsp{M}{\gstarpq{p}{q}}$. To achieve the definition of $\LieagbdPartial$ and $\LieagbdPartialbar$ on any degree of $(p,q)$ Lie algebroid differential form, we can check the image of 
\begin{align*}
    \LieagbdDiff:
    \Apq{1}{0}(M)\to\Ak{2}(M)
    =
    \Apq{2}{0}(M)\oplus\Apq{1}{1}(M)\oplus\Apq{0}{2}(M)
\end{align*}
does not actually contain any components in $\Apq{0}{2}(M)$. This is promised since
\begin{align*}
    [\basev{1},\basev{2}]\subset\secsp{M}{\g^{1,0}}
    \qquad\text{and}\qquad
    [\basedualv{1},\basedualv{2}]\subset\secsp{M}{\g^{0,1}}
\end{align*}
for any  $\basev{1},\basev{2}\in\secsp{M}{\g^{1,0}}$ and $\basedualv{1},\basedualv{2}\in\secsp{M}{\g^{0,1}}$ since the almost complex structure J is integrable. Then, using the natural projections
\begin{align*}
    \pi_1:
    \Apq{2}{0}(M)\oplus\Apq{0}{1}(M)
    \to
    \Apq{2}{0}(M)
\qquad\text{and}\qquad
    \pi_2:
    \Apq{2}{0}(M)\oplus\Apq{0}{1}(M)
    \to
    \Apq{0}{1}(M)
    ,
\end{align*}
we define $\LieagbdPartial$ and $\LieagbdPartialbar$ on $\Apq{1}{0}(M)$ by setting
\begin{align*}
    \LieagbdPartial:=
    \pi_{1}\, \LieagbdDiff
    :\Apq{1}{0}(M) \to \Apq{2}{0}(M)
\qquad\text{and}\qquad
    \LieagbdPartialbar:=
    \pi_{2}\, \LieagbdDiff
    :\Apq{1}{0}(M) \to \Apq{1}{1}(M)
    .
\end{align*}
Therefore, we can conclude the decomposition of $\LieagbdDiff$ on $\Apq{1}{0}(M)$:
\begin{align*}
    \LieagbdDiff=
    \LieagbdPartial+\LieagbdPartialbar:
    \Apq{1}{0}(M)
    \to
    \Apq{2}{0}(M)\oplus\Apq{1}{1}(M)
    .
\end{align*}
Analogous to the case of $\Apq{1}{1}(M)$, the similar decomposition of the Lie algebroid differential
\begin{align*}
    \LieagbdDiff=
    \LieagbdPartial+\LieagbdPartialbar:
    \Apq{0}{1}(M)
    \to
    \Apq{1}{1}(M)\oplus\Apq{0}{2}(M)
\end{align*}
also holds on $\Apq{0}{1}(M)$. By checking $
\LieagbdDiff 
    (
        f\,
        \basexi{1}\wedge\cdots\wedge\basexi{p}
        \wedge
        \basedualxi{1}\wedge\cdots\wedge\basedualxi{q}
    )
$, the decomposition $\LieagbdDiff=\LieagbdPartial+\LieagbdPartialbar$ can be generalized to
\begin{align*}
    \LieagbdDiff=
    \LieagbdPartial+\LieagbdPartialbar:
    \Apq{p}{q}(M) \to \Apq{p+1}{q}(M)\oplus\Apq{p}{q+1}(M).
\end{align*}
for any $(p,q)$-degree Lie algebroid differential form.

% 然后说connection
\begin{definition}
    Suppose $(\g,M,\anchor)$ is a Lie algebroid and $E\to M$ is a vector bundle. A \keyword{Lie algebroid connection} on $E$ is a linear operator
    \begin{align*}
        \LieagbdConn{}{}:\Ak{0}(M,E)\to\Ak{1}(M,E)
    \end{align*}
satisfying the Leibniz rule
\begin{align*}
    \LieagbdConn{}{v}(f\,s)=f\,\LieagbdConn{}{v}(s)+f_{*}\big(\anchor(v)\big)(s),
\end{align*}
where  $v\in\secsp{M}{\g},f\in C^{\infty}(M),s\in\secsp{M}{E}$.
\end{definition}

Analogous to the classical connection construction, we can generalize the Lie algebroid connection to the higher order:
\begin{align*}
    \LieagbdConn{}{}:\Ak{k}(M,E)\to\Ak{k+1}(M,E),
\end{align*}
and define its corresponding \keyword{Lie algebroid curvature tensor} as 
\begin{align*}
    R(\LieagbdConn{}{}):=\LieagbdConn{}{}^2\in\Ak{2}(M,\End(E)).
\end{align*}

In the classical case of integration on a manifold, a non-vanishing section of $\wedge^{top}\TstarM{}$ gives a volume form. However, in the case of a Lie algebroid, any section of $\wedge^{top}\gstar$ does not automatically generate a volume form directly. The solution is reached by introducing \keyword{transversal densities}, which are sections of $L_{\g}:=\bigwedge^{top}\TstarM{} \otimes \bigwedge^{top}\g$.

\begin{definition}
Suppose $\g\to M$ is the Lie algebroid associated with the Lie groupoid $G\rightrightarrows M$. We say the Lie groupoid $G$ is unimodular if there exists a global nonvanishing section $\Omega$ of $L_{\g}$.
\end{definition}
For any $s\in\secsp{M}{\wedge^{top}\gstar}$, its integral over M is 
\begin{align*}
    \int_{M}
    \langle s,\Omega \rangle,
\end{align*}
where $\langle \cdot,\cdot\rangle$ pairs the $\wedge^{top}\g$ and $\wedge^{top}\gstar$ components and produces $\innerprod{s}{\Omega}{}$, a volume form on M.

Recall that the top degree wedge product of a K\"{a}hler form is a canonical volume form. Following this idea, the top degree wedge product of the Lie algebroid K\"{a}hler form $\omega^n\in\secsp{M}{\bigwedge^{top}\gstar}$ gives rise to a Lie algebroid volume form since its combination with the transversal density $\Omega\in \secsp{M}{\bigwedge^{top}\TstarM{} \otimes \bigwedge^{top}\g}$ produces an actual volume form:
\begin{align*}
    \innerprod{\omega^n}{\Omega}{}\in\secsp{M}{\bigwedge^{top}\TstarM{}}.
\end{align*}
We use this canonical Lie algebroid volume form to define the inner product
\begin{align*}
    \pinnerprod{\basexi}{\phi}{\Ak{k}}
    :=
    \int_{M}
    \innerprod
        {\pinnerprod{\basexi}{\phi}{h}\frac{\omega^n}{n!}}
        {\Omega}
        {}
\end{align*}
for $\basexi,\phi \in \Ak{k}(M)$, where $\pinnerprod{}{}{h}$ is the Hermitian metric on $\wedge^{k}\gstar$. The inner product leads to the introduction of an adjoint operator. The formal adjoint of a Lie algebroid differential $\LieagbdDiff$ is an operator
\begin{align*}
    \LieagbdDiff^{*}:\Ak{k+1}(M)\to\Ak{k}(M)
\end{align*}
satisfying $\pinnerprod{\LieagbdDiff^{*}\basexi}{\phi}{\Ak{k}}=\pinnerprod{\basexi}{\LieagbdDiff\phi}{\Ak{k}}$. Similarly, $\LieagbdPartial^{*}$ and $\LieagbdPartialbar^{*}$ are the formal adjoints of $\LieagbdPartial$ and $\LieagbdPartialbar$.

\subsection{Lie algebroid characteristic classes and Chern-Weil theory}

On a manifold, the Chern–Weil theory computes topological invariants of a vector bundle in terms of the Chern class associated with a curvature form. Likewise, we can generalize the Chern–Weil theory to define topological invariants for a vector bundle on a Lie algebroid.

Suppose $(\g,M,\rho)$ is a Lie algebroid with a Lie algebroid connection $\LieagbdConn{}{}$ and $E\to M$ is a vector bundle. We say a Lie algebroid connection admits a \keyword{representation} of $\g$ on E if its curvature $R(\LieagbdConn{}{})=0$. As the curvature tensor can also be written as $R
(\LieagbdConn{}{})(v,w)=[\LieagbdConn{}{v},\LieagbdConn{}{w}]-\LieagbdConn{}{[v,w]}$, we can construct a Lie algebroid cohomology complex
\begin{equation*}
\begin{tikzcd}
    0
    \arrow[r,"\KoszulDiff"]
    &
    \Ak{0}(M,E)
    \arrow[r,"\KoszulDiff"]
    &
    \cdots
    \arrow[r,"\KoszulDiff"]
    &
    \Ak{\rank(\g)}(M,E)
    \arrow[r,"\KoszulDiff"]
    &
    0
\end{tikzcd}
\end{equation*}
such that the differential $\KoszulDiff$ is given by the Koszul formula:
\begin{align*}
    \KoszulDiff\,s(v_0,\cdots,v_k):=
    \begin{pmatrix*}[l]
        \displaystyle\sum_{i=0}^{k}
        (-1)^{i}
        \LieagbdConn{}{v_i}
        s(v_{0},\cdots,\widehat{v_{i}},\cdots,v_{k})
        \\
        +
        \\
        \displaystyle\sum_{i<j}
        (-1)^{i+j-1}
        s([v_i,v_j],v_1,\cdots,\widehat{v_{i}},\cdots,\widehat{v_{j}},\cdots,v_{k})
    \end{pmatrix*}.
\end{align*}
We use $\cohomo{k}{}(\g,E)$ to denote the \keyword{Lie algebroid cohomology} generated by the representation and the complex described above. The cohomology group is simply written as $\cohomo{k}{}(\g)$ when the vector bundle $E$ is trivial. It degenerates to the classical de Rham cohomology when $\g=\TstarM{}$ and $\KoszulDiff=\LieagbdDiff$.

When the vector bundle $E=L_{\g}:=\bigwedge^{top}\TstarM{} \otimes \bigwedge^{top}\g$, we have the following lemma for the integration of Lie algebroid cohomology classes.
\begin{lemma}
Suppose $(\g,M,\rho)$ is a Lie algebroid, then for any $s\in\Ak{\bullet}(M,L_{\g})$ the integral
\begin{align*}
    \int_{M} s
\end{align*}
is well defined and depends only on its cohomology class in $\cohomo{\rank(\g)}{}(\g;L_{\g})$, where we view $s\in\secsp{M}{\wedge^{top}\TstarM{}}$ through the identities 
\begin{align*}
    \Ak{\bullet}(M,L_{\g})
    =
    \secsp{M}{\wedge^{\bullet}\gstar\otimes L_{\g}}
    =
    \secsp{M}{\wedge^{\bullet}\gstar\otimes\wedge^{top}\g\wedge^{top}\TstarM{}}
\end{align*}
and the pairing between $\wedge^{top}\g$ and $\wedge^{top}\gstar$.
\end{lemma}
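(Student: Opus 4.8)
The plan is to reduce the statement to the ordinary Stokes theorem on $M$ by realizing $\int_M$ as the de Rham differential seen through the anchor, after a pointwise contraction of the $L_{\g}$-coefficients down to densities. Throughout set $n=\rank(\g)$ and $m=\dim M$, and assume $M$ compact (as in the index-theory setting) so that all integrals converge. First I would dispose of the well-definedness claim. Since the degree equals $\rank(\g)$ we have $\wedge^{\bullet}\gstar=\wedge^{n}\gstar$, and the stated identification reads
\[
    \Ak{n}(M,L_{\g})=\secsp{M}{\wedge^{n}\gstar\otimes\wedge^{n}\g\otimes\wedge^{m}\TstarM{}}.
\]
The duality pairing $\wedge^{n}\gstar\otimes\wedge^{n}\g\to\smoothfuncsp{M}$ contracts the first two factors into a genuine function, so every $s$ determines a section of $\wedge^{m}\TstarM{}$, i.e.\ a top-degree form on $M$, whose ordinary integral is $\int_M s$. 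This gives the first assertion.

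It remains to prove cohomological invariance. Because $L_{\g}$ carries its canonical flat $\g$-representation $D_X(\Lambda\otimes\mu)=(\mathcal{L}_X\Lambda)\otimes\mu+\Lambda\otimes\mathcal{L}_{\anchor(X)}\mu$, with $\mathcal{L}_X$ the Lie-algebroid Lie derivative on $\wedge^{n}\g$ and $\mathcal{L}_{\anchor(X)}$ the usual Lie derivative of forms, the Koszul differential $\KoszulDiff$ and the top cohomology $\cohomo{n}{}(\g;L_{\g})$ are defined. Since degree $n$ is top, every $s$ is automatically $\KoszulDiff$-closed, so it carries a class, and it suffices to show
\[
    \int_M \KoszulDiff t = 0 \qquad\text{for every } t\in\Ak{n-1}(M,L_{\g}).
\]
I would make the representation explicit in a local frame $e_1,\dots,e_n$ of $\g$ with dual coframe $e^1,\dots,e^n$, anchor images $X_i=\anchor(e_i)$, and a local volume form $\mu$, so that the Koszul formula for $\KoszulDiff t$ is fully computable.

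The heart of the argument, and the step I expect to be the main obstacle, is the Lie-algebroid divergence identity, valid after contracting the $L_{\g}$-factor to a density,
\[
    \langle \KoszulDiff t \rangle = \diff\,\langle \contract{\anchor} t \rangle,
\]
where $\langle \contract{\anchor} t\rangle$ is the $(m-1)$-form produced by contracting $\wedge^{n-1}\gstar$ against $\wedge^{n}\g$ into $\g$, applying $\anchor$, and feeding the result into the volume factor $\mu$. Writing $t=\sum_i f_i\,(e^1\wedge\cdots\widehat{e^i}\cdots\wedge e^n)\otimes(e_1\wedge\cdots\wedge e_n)\otimes\mu$, the Koszul formula yields anchor-derivative terms $\anchor(e_i)(f_i)$ together with structure-constant terms from the brackets $[e_i,e_j]$; once the $\wedge^{n}\g\otimes\wedge^{m}\TstarM{}$ factor is differentiated by $D$, the divergence contributions $\mathcal{L}_{X_i}\mu$ are exactly what is needed for the bracket and structure-constant terms to reorganize into the honest total derivative $\diff\big(\sum_i\pm f_i\,\contract{X_i}\mu\big)$. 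Verifying this cancellation is where the choice of $L_{\g}$ (rather than $\wedge^{n}\gstar$ alone) is essential, and it is the one genuinely computational point. Granting the identity, Stokes' theorem on the closed manifold $M$ gives $\int_M \diff\langle \contract{\anchor} t\rangle=0$, hence $\int_M \KoszulDiff t=0$.

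As a consistency check I would note that for $\g=\mathrm{T}M$ with $\anchor=\id$ the bundle $L_{\g}$ is canonically trivial, $\KoszulDiff=\LieagbdDiff=\diff$, and the divergence identity degenerates to the classical Stokes theorem, recovering that de Rham integration descends to $\cohomo{m}{\mathrm{dR}}(M)$. Combining the two parts, $\int_M$ is a well-defined linear functional on $\Ak{n}(M,L_{\g})$ that annihilates $\KoszulDiff$-exact forms, so it factors through $\cohomo{\rank(\g)}{}(\g;L_{\g})$, which is exactly the claim.
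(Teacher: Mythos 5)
Your argument is correct and coincides with the proof the paper relies on: the paper gives no argument of its own, deferring entirely to \cite{ELW1999}, and your contraction map together with the divergence identity $\langle \KoszulDiff t\rangle = \diff\,\langle \iota_{\anchor} t\rangle$ followed by Stokes' theorem is precisely the chain-map argument proved there. The one step you flag as "granting the identity" is exactly the computational lemma of that reference, so your outline reconstructs the cited proof rather than taking a genuinely different route.
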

\begin{proof}
    See reference \cite{ELW1999}.
\end{proof}

Similar to the manifold cases, the complex vector bundle $\g$ with a Hermitian metric carries a structure group $\unitarygp{n}$, which is a unitary matrix group. Then, the Lie algebroid connection $\LieagbdConn{}{}$ can be written into a collection of skew-Hermitian matrices $\{\omega_{\alpha}\in\Ak{1}(U_{\alpha})\otimes\unitaryLalgb{n}\}$ whose entries are Lie algebroid 1-forms, where $\unitaryLalgb{n}$ is the Lie algebra of the unitary group and $\{U_{\alpha}\}$ is a covering of the base manifold $M$. Then, the corresponding Lie algebroid curvature form $\{\Omega_{\alpha}\in\Ak{2}(U_{\alpha},\unitaryLalgb{n})\}$ is a collection of skew-Hermitian matrices whose entries are Lie algebroid 2-forms.

We can extend the classical Chern–Weil theory to define characteristic classes on a Lie algebroid. Let $\C[\unitaryLalgb{n}]^{\unitarygp{n}}$ denote the ring of $\unitarygp{n}$ adjoint action invariant $\C$-valued polynomials on $\unitaryLalgb{n}$, which is the space of $n\times n$ skew-Hermitian matrices. Here, the adjoint invariant means that $h[X]=h[YXY^{-1}]$ for all $X,Y\in\unitaryLalgb{n}$. The ring of k-degree homogeneous polynomial functions on a vector space $V$ is isomorphic to the space of symmetric multilinear functional on $\prod^{k} V$. A polynomial in $\C[\unitaryLalgb{n}]^{\unitarygp{n}}$ can be expressed as 
\begin{align*}
    h[
    \begin{pmatrix}
        x_{1}       &\cdots & x_{n} \\
        \vdots      &       &\vdots \\
        x_{n^2-n}   &\cdots &x_{n^2}\\
    \end{pmatrix}
    ]
    =
    \sum_{i_1,\cdots,i_k=1}^{n^2}
    h_{i_1,\cdots,i_k} 
    x_{i_1}\cdots x_{i_k}
    \in\C[\unitaryLalgb{n}]^{\unitarygp{n}}
    ,
\end{align*}
where the coefficients $h_{i_1,\cdots, i_k}=h_{\sigma(i_1),\cdots ,\sigma(i_k)}$ for any permutation $\sigma$. Such a polynomial can be viewed as a $\unitarygp{n}$ conjugation action invariant k-multilinear functional $\tilde{h}$ on $\prod^k(\unitaryLalgb{n})$ via
\begin{align*}
    \tilde{h}:\prod^k(\unitaryLalgb{n}) 
    &\to
    \C
    \\
    (X^{1},\cdots,X^{k})
    &\mapsto
    \sum_{i_1,\cdots,i_k=1}^{n^2}
    h_{i_1,\cdots,i_k}
    x^{1}_{i_1}\cdots x^{k}_{i_k},
\end{align*}
where the upper index number distinguishes different input matrices
\begin{align*}
    X^{j}=
    \begin{pmatrix}
        x^{j}_{1}       &\cdots &x^{j}_{n}
        \\
        \vdots          &       &\vdots
        \\
        x^{j}_{n^2-n}   &\cdots &x^{j}_{n^2}
    \end{pmatrix}.
\end{align*}

Lie algebroid curvatures are in $\Ak{2}(U_{\alpha})\otimes\unitaryLalgb{n}$, so we need to generalize $\tilde{h}$ to plug the part $\Ak{2}(U_{\alpha})$ into polynomials:
\begin{align*}
    \tilde{h}:\prod^k(\Ak{2}(U_{\alpha})\otimes\unitaryLalgb{n}) 
    &\to
    \Ak{2k}(U_{\alpha})
    \\
    (\phi_{1}\otimes X^{1},\cdots,\phi_{k}\otimes X^{k})
    &\mapsto
    \phi_1\wedge\cdots\wedge\phi_k
    \sum_{i_1,\cdots,i_k=1}^{n^2}
    h_{i_1,\cdots,i_k}
    x^{1}_{i_1}\cdots x^{k}_{i_k}.
\end{align*}

The original $h$ induces an $\Ak{even}(U)$-valued polynomial function via
$
    h(\phi\otimes X)
    =
    \tilde{h}
    (\phi\otimes X, \cdots, \phi\otimes X).
$
We need to be cautious as $\tilde{h}$ is multilinear but $h$ is not. Thus, given a local Lie algebroid curvature 
 $\Omega=\sum\basexi{i}\otimes X^{i}$, we have the following characteristic class:
 \begin{align*}
     h(\Omega)
     &=
        \tilde{h}(\sum\basexi{i}\otimes X^{i},\cdots,\sum\basexi{i}\otimes X^{i})
    \\
    &=
        \sum_{i_1,\cdots,i_k}
        \basexi{i_1}
        \wedge\cdots\wedge
        \basexi{i_k}
        \tilde{h}(X^{i_1},\cdots,X^{i_k})
    .
 \end{align*}

The classical Chern–Weil theory can be generalized to a Lie algebroid. 
\begin{theorem}
    Suppose $\LieagbdConn{}{}$ is a Lie algebroid connection on a complex Lie algebroid $(\g,M,\rho)$ carrying a Hermitian metric and $\{U_{\alpha}\}$, which is an open cover of $M$ by local charts. The complex rank of $\g$ is $n$, while $R(\LieagbdConn{}{})$ is the Lie algebroid curvature corresponding to $\LieagbdConn{}{}$. Given an adjoint action invariant $\C$-valued polynomial $h\in\C[\unitaryLalgb{n}]^{\unitarygp{n}}$, then $h(R(\LieagbdConn{}{}))\in\Ak{even}(M)$ is made up of the collection of even-degree Lie algebroid forms $\{h(\Omega_{\alpha})\in\Ak{even}(U_i)\}$, where $\{\Omega_{\alpha}\in\Ak{2}(U_{\alpha},\unitaryLalgb{n})\}$. It is well defined and closed, i.e., $\LieagbdDiff h(R(\LieagbdConn{}{}))=0$. Moreover, supposing $\tilde{\LieagbdConn{}{}}$ is another Lie algebroid connection, then $h(R(\LieagbdConn{}{}))$ and $h(R(\tilde{\LieagbdConn{}{}}))$ are in a same de Rham cohomology class.
\end{theorem}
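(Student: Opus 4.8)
The plan is to transplant the classical Chern--Weil argument essentially verbatim, exploiting the fact that the Lie algebroid differential $\LieagbdDiff$ is a degree-one derivation obeying the graded Leibniz rule recorded earlier and satisfying $\LieagbdDiff^2=0$ (a consequence of the Jacobi identity for the bracket), exactly like the de Rham differential. The proof splits into the three assertions: gluing to a global form, closedness, and independence of the connection. Two algebraic inputs recur throughout. The first is the \emph{infinitesimal invariance} of $\tilde h$: differentiating $\tilde h(\mathrm{Ad}_{\exp(sY)}X_1,\dots,\mathrm{Ad}_{\exp(sY)}X_k)=\tilde h(X_1,\dots,X_k)$ at $s=0$ yields
\[
\sum_{j=1}^{k}\tilde h(X_1,\dots,[Y,X_j],\dots,X_k)=0 \qquad\text{for all }Y\in\unitaryLalgb{n}.
\]
The second is the \emph{second Bianchi identity} $\LieagbdDiff\Omega_{\alpha}=\Omega_{\alpha}\wedge\omega_{\alpha}-\omega_{\alpha}\wedge\Omega_{\alpha}$, obtained by applying $\LieagbdDiff$ to $\Omega_{\alpha}=\LieagbdDiff\omega_{\alpha}+\omega_{\alpha}\wedge\omega_{\alpha}$ and using $\LieagbdDiff^2=0$.

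For well-definedness, I work on an overlap $U_{\alpha}\cap U_{\beta}$. A change of unitary local frame by $g_{\alpha\beta}\colon U_{\alpha}\cap U_{\beta}\to\unitarygp{n}$ transforms the connection form by $\omega_{\beta}=g_{\alpha\beta}^{-1}\omega_{\alpha}g_{\alpha\beta}+g_{\alpha\beta}^{-1}\LieagbdDiff g_{\alpha\beta}$, and hence the curvature by conjugation, $\Omega_{\beta}=g_{\alpha\beta}^{-1}\Omega_{\alpha}g_{\alpha\beta}$; both are purely formal consequences of the Leibniz rule for $\LieagbdDiff$. Since $h\in\C[\unitaryLalgb{n}]^{\unitarygp{n}}$ is $\mathrm{Ad}$-invariant, $h(\Omega_{\beta})=h(\Omega_{\alpha})$ on the overlap, so the local forms $\{h(\Omega_{\alpha})\}$ patch to a single global element $h(R(\LieagbdConn{}{}))\in\Ak{even}(M)$.

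For closedness I compute in a single chart. Because $\Omega_{\alpha}$ is of even degree, the Leibniz rule introduces no signs, giving $\LieagbdDiff h(\Omega_{\alpha})=\sum_{j}\tilde h(\Omega_{\alpha},\dots,\LieagbdDiff\Omega_{\alpha},\dots,\Omega_{\alpha})$. Substituting the Bianchi identity into the $j$-th slot and invoking the form-valued polarization of the infinitesimal invariance identity, with $\omega_{\alpha}$ playing the role of $Y$ and the even degree of the curvature forcing all graded signs to be positive, makes the sum vanish; thus $\LieagbdDiff h(R(\LieagbdConn{}{}))=0$.

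For independence of the connection I use a transgression argument. Writing $a=\tilde{\LieagbdConn{}{}}-\LieagbdConn{}{}\in\Ak{1}(M,\unitaryLalgb{n})$, which is tensorial and so transforms by conjugation (making the forms below globally defined), I interpolate $\LieagbdConn{t}{}=\LieagbdConn{}{}+t\,a$ with curvature $\Omega_t$, so that $\tfrac{d}{dt}\Omega_t=\LieagbdDiff a+\omega_t\wedge a+a\wedge\omega_t$. The same Bianchi-plus-invariance manipulation then yields $\tfrac{d}{dt}h(\Omega_t)=\LieagbdDiff\bigl(k\,\tilde h(a,\Omega_t,\dots,\Omega_t)\bigr)$, and integrating over $t\in[0,1]$ exhibits the difference $h(R(\tilde{\LieagbdConn{}{}}))-h(R(\LieagbdConn{}{}))=\LieagbdDiff\bigl(k\int_0^1\tilde h(a,\Omega_t,\dots,\Omega_t)\,dt\bigr)$ as $\LieagbdDiff$-exact, so the two forms represent the same Lie algebroid cohomology class. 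The main obstacle is not the cohomological bookkeeping, which is formally identical to the classical case, but the verification that its structural pillars genuinely hold for $\LieagbdDiff$: that it is a square-zero derivation, that the connection form obeys the gauge law above, and that the Bianchi identity follows. Each rests only on the Leibniz rule and $\LieagbdDiff^2=0$, so once they are in place the remainder of the argument transfers with no analytic input.
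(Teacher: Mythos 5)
Your proof is correct and follows exactly the route the paper intends: the paper states this theorem without a written proof, asserting that the construction is formally identical to classical Chern--Weil theory, and your argument supplies precisely those classical details (gauge covariance plus $\mathrm{Ad}$-invariance for gluing, Bianchi plus infinitesimal invariance for closedness, transgression for connection-independence), each resting only on $\LieagbdDiff$ being a square-zero derivation. The one implicit hypothesis worth flagging is that both connections must be compatible with the Hermitian metric so that $a$ and $\Omega_t$ are $\unitaryLalgb{n}$-valued, an assumption the paper also makes tacitly when it writes connection forms as skew-Hermitian matrices.
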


We generalize the classical characteristic classes to Lie algebroids.
\begin{itemize}
    \item The Lie algebroid total Chern class $\cg{\g}(E)$ is defined by the polynomial $\mathrm{det}(1+X)$.

    \item The Lie algebroid Chern character $\chg{\g}(E)$ is given by the polynomial $\Tr(e^{X})$, where $e^X$ is the exponential of the matrix.

    \item The Lie algebroid Todd class $\Tdg{g}(E)$ is obtained from $\mathrm{det}(\frac{X}{1-e^{-X}})$.
\end{itemize} 

The classical Chern–Weil theory can be viewed as the ring homomorphism
\begin{align*}
    \C[\unitaryLalgb{n}]^{\unitarygp{n}}
    &\to
    \cohomo{even}{deRham}(M)
    \\
    h
    &\mapsto
    h(R(\nabla)),
\end{align*}
 which is determined by the Hermitian vector bundle $E\to M$ and any connection $\nabla$ on it. This ring homomorphism can also be generalized to the Lie algebroid as the algebraic construction of Lie algebroid characteristic classes is the same as in the classical case. Using $\cohomo{even}{}(\g)$ to replace $\cohomo{even}{deRham}(M)$, we obtain the ring homomorphism
\begin{align*}
    \C[\unitaryLalgb{n}]^{\unitarygp{n}}
    &\to
    \cohomo{even}{}(\g)
    \\
    h
    &\mapsto
    h(E).
\end{align*}
This is convenient: if an equality of polynomials in $\C[\unitaryLalgb{n}]^{\unitarygp{n}}$ holds, then its image through the ring homomorphism also holds, which is an equality of Lie algebroid characteristic classes. Moreover, we can use diagonalization to simplify checking. Since a skew-symmetric matrix is diagonalizable, for any $X=U\Lambda U^{-1}\in\unitaryLalgb{n}$ where $U\in\unitarygp{n}$ and $\Lambda$ is a diagonal matrix composed by eigenvalues of $X$, we can obtain the value $h(X)=h(\Lambda)$ for any adjoint action invariant polynomial $h\in\C[\unitaryLalgb{n}]^{\unitarygp{n}}$. The following two adjoint invariant polynomials are later used in the index theorem:
\begin{align}
    \Tdg{}_{\C}:
    \begin{pmatrix*}
        \lambda_1 & &
        \\
        &\ddots&
        \\
        &&\lambda_n
    \end{pmatrix*}
    \mapsto
    \prod^{n}_{i=1}
    \frac
        {\lambda_i}
        {1-e^{-\lambda_i}}
    \label{EQ-Part 2-(2)-poly for Td}
\end{align}
and
\begin{align}
    e:
    \begin{pmatrix*}
        \lambda_1 & &
        \\
        &\ddots&
        \\
        &&\lambda_n
    \end{pmatrix*}
    \mapsto
    \lambda_{1}\cdots\lambda_{n}.
    \label{EQ-Part 2-(2)-poly for Euler}
\end{align}
%This trick provides an clear description for the adjoint polynomial for Chern class, Chern character, Todd class and Euler class.

%Let us use $\lambda_1,\cdots,\lambda_n$ to denote these eigenvalues,then
%\begin{itemize}
%    \item the value of polynomial for Chern class is $(1+\lambda_1)\cdots(1+\lambda_n)$,
%
%    \item the value of polynomial for Chern character is $(1+\lambda_1)\cdots(1+\lambda_n)$,
%\end{itemize}

%%%%%%%%%%%%%%%%%%%%%%%%%%%%%%%%%%%%%%%%%%%%%%%%%%%%%%%%%%%%%%%%%%%%%%%%
%%%%%%%%%%%%%%%%%%%%%%%%%%%%%%%%%%%%%%%%%%%%%%%%%%%%%%%%%%%%%%%%%%%%%%%%
\section{Index theory on Lie algebroids}
\label{sec:index theory on LIe algebroids}
In this section, we first introduce the index theorem \cite{PPT2015} for the G-invariant differential operators acting on the equivariant vector bundle. The topological side of the index theorem involves integration over the dual of the Lie algebroid of cohomology classes of the pullback of a Lie algebroid. Secondly, we introduce a version of the Thom isomorphism for Lie algebroids and its application on the topological side \cite{PPT2015}. This process is similar to the "integration along the fiber". The integration on the topological side, which is originally over the Lie algebroid, degenerates to the integration over the base manifold. Pflaum, Posthuma, and Tang used this technique to obtain a simplified index formula which is completely similar to the usual cohomological index formula in the Atiyah–Singer index theorem, except that the characteristic classes take values in Lie algebroid cohomology instead of the de Rham cohomology. Notably, the Lie algebroid $\g=\T M$, which is the topological side, is the same as the topological side of the Atiyah index theorem since we have shown that the Lie algebroid characteristic classes degenerate to the classical characteristic classes in this situation. 
\subsection{G-invariant differential operator and index theory}
\label{sec:G-inv diff op and index thm}
%???[PPT2015]里的是不是？？？invariant operator???

Suppose $(G\rightrightarrows M,s,t,m,u,i)$ is a Lie groupoid. Recall that the image of the unit map is $G^{(0)}=u(M)\cong M$, and the Lie algebroid $\g$ associated with $G$ is the vertical bundle with respect to the source map restricted on $G^{(0)}$ with the differential of $s$ restricted on the Lie algebroid as the anchor map. Now, let us introduce some new notations. We say a vector bundle $\mathbf{E}\to G$ is an \keyword{equivariant vector bundle} if it is the pullback bundle $\mathbf{E}:=s^{*}E$ for the vector bundle $E\to M$. Let $\mathbf{F}=s^{*}(F)\to G$ be another equivariant vector bundle. Next, we define the \keyword{G-translation} on sections of the equivariant vector bundle. For any $g\in G$ and $\gamma\in\secsp{G}{\mathbf{E}}$, the G-translation of $\gamma$ about $g$ is a translation
\begin{align*}
    U_g:
    \secsp{t^{-1}(s(g))}{\mathbf{E}|_{t^{-1}(s(g))}}
    \to
    \secsp{t^{-1}(t(g))}{\mathbf{E}|_{t^{-1}(t(g))}}
\end{align*}
such that $(U_g \gamma)(h):=\gamma(hg^{-1})$, where we identify $\gamma(hg^{-1})\in \mathbf{E}_{hg^{-1}}\cong \mathbf{E}_{s(h)}\cong\mathbf{E}_{h}$. The same G-translation $U_g$ for the equivariant vector bundle $\mathbf{F}\to G$ is also defined. Using these two notations, we can define the G-invariant differential operator.
\begin{definition}
A \keyword{G-invariant differential operator} $D$ is a family of classical differential operators 
\begin{align*}
    \{D_x:\secsp{t^{-1}(x)}{\mathbf{E}|_{t^{-1}(x)}}\to\secsp{t^{-1}(x)}{\mathbf{F}|_{t^{-1}(x)}}\}_{x\in M}
\end{align*}
such that all $D_{x}$ obey the G-invariant constraint $D_{t(g)}U_{g}=U_{g}D_{s(g)}$.
\end{definition}
We use $\mathcal{D}_{\mathrm{inv}}(G;\mathbf{E},\mathbf{F})$ to denote the space of G-invariant differential operators on G. Such an operator is entirely determined by its restriction on the germ of $G^{(0)}$. Let us use $\mathcal{U}(\g)$ to denote the space of restrictions of the G-invariant differential operator on $G^{(0)}$. The order of these differential operators naturally gives a filtration $\mathcal{U}(\g)=\cup_{k\geq0}\,\mathcal{U}_{k}(\g)$. In the more general case, the space of restrictions of operators acting between equivariant vector bundles $\mathbf{E}$ and $\mathbf{F}$ is denoted by $\mathcal{U}(\g;E,F)=\cup_{k\geq0}\,\mathcal{U}_{k}(\g;E,F)$, which can naturally be split into $\mathcal{U}(\g;E,F)=\mathcal{U}(\g)\otimes\Hom(E,F)$.

We use $D\in\mathcal{U}(\g;E,F)$ to denote a G-invariant differential operator $D$ as the whole operator on $G$ can be recovered from its restriction on $G^{(0)}$. Since the t-fiber direction on $G^{(0)}$ is the Lie algebroid direction, the principal map of the G-invariant operators descends to the Lie algebroid level:
\begin{align*}
    \sigma_{k}:
    \mathcal{U}_{k}(\g;E,F)
    \to
    \secsp{M}{\Sym^{k}\g\otimes\Hom(E,F)}.
\end{align*}
Then, we say an operator $D\in \mathcal{U}_{k}(\g;E,F)$ is \keyword{elliptic} if its symbol $\sigma_k(D):\secsp{M}{\gstar}\to\Hom(E,F)$ is invertible for all nonzero sections of $\gstar$. This symbol defines an element in the K-group:
\begin{align*}
    \Ind(D)\in K_{0}(\mathcal{A}_G).
\end{align*}
The explanation of the K-group $K_{0}(\mathcal{A}_G)$ and the transformation from a symbol to an element in the K-group can be found in \cite{CO3}. We cannot stop at the K-group $K_{0}(\mathcal{A}_G)$ since the index number should be a real number, at least. In \cite{PPT2015}, the authors introduced a paring between differentiable cohomology classes $\cohomo{\mathrm{ev}}{\mathrm{diff}}(G,L_{\g})$ and an elliptic operator $D\in\mathcal{U}(\g;E,F)$, and defined the higher index:
\begin{align*}
    \Ind_{v}(D):=\langle \chi(v),\Ind(D)\rangle.
\end{align*}
We denote this higher index as the analytical index in index theory.

The following is an index theory for G-invariant differential operators on the Lie groupoid.
\begin{theorem}[\cite{PPT2015}]
\label{thm-Part 2-(3)-unsimplified index thm for Lie gpod}
Let $(G\rightrightarrows M,s,t,m,u,i)$ be a Lie groupoid whose base manifold $M$ is compact and $(\g, M,\rho)$ be its corresponding Lie algebroid. Suppose $E\to M$ and $F\to M$ are vector bundles and the G-invariant differential operator $D\in\mathcal{U}(\g;E,F)$ is elliptic. Let $v$ be a differentiable cohomology class of $G$. Then, the index formula for $D$ is
\begin{align}
    \Ind_{v}(D)
    =
    \frac{1}{(2\pi\sqrt{-1})^{k}}
    \int_{\g^{*}}
    \pi^{*}\Phi_{\g}(v)
    \wedge
    \Tdg{\pi^{!}\g}(\pi^{!}\g\otimes\C)
    \wedge
    \chg{\pi^{!}\g}(\sigma(D)).
\label{EQ-Part 2-(3)-index theorem}
\end{align}
The notations on the right-hand side are described as follows:
\begin{itemize}
    \item $\pi^{!}\g$ is the pullback Lie algebroid along the projection $\pi:\g^{*}\to M$.
    
    \item $\Phi_\g:\cohomo{\bullet}{\mathrm{diff}}(G;L_{\g})\to\cohomo{\bullet}{}(\g;L_{\g})$ is the van Est map for the Lie groupoid G.
    
    \item $\pi^{*}:\cohomo{\bullet}{}(\g;L_{\g})\to\cohomo{\bullet}{}(\pi^{!}\g;\pi^{*}L_{\g})$ is the pullback induced by $\tilde{\pi}:\pi^{!}\g\to\g$ in the construction of the pullback Lie algebroid.

    \item $\Tdg{\pi^{!}\g}(\pi^{!}\g\otimes\C)$ and $\chg{\pi^{!}\g}(\sigma(D))\in\cohomo{\bullet}{}(\pi^{!}\g)$ are Lie algebroid characteristic classes.
\end{itemize}
\end{theorem}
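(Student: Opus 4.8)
The plan is to follow the localized index-theoretic strategy of Pflaum, Posthuma, and Tang: first realize the higher index $\Ind_{v}(D)$ as a concrete pairing in noncommutative geometry, then reduce it from the groupoid to the infinitesimal level through the van Est map, and finally evaluate it by an algebraic index theorem over $\g^{*}$. Concretely, the ellipticity of $D\in\mathcal{U}(\g;E,F)$ yields a parametrix modulo smoothing operators in the convolution algebra $\mathcal{A}_{G}$, so the symbol $\sigma(D)$ determines an index class $\Ind(D)\in K_{0}(\mathcal{A}_{G})$. A differentiable class $v\in\cohomo{\mathrm{ev}}{\mathrm{diff}}(G;L_{\g})$ is transported to a cyclic cocycle $\chi(v)$ on $\mathcal{A}_{G}$ via the transversal density, and $\Ind_{v}(D)=\langle\chi(v),\Ind(D)\rangle$ is the resulting pairing between cyclic cohomology and $K$-theory. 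The whole problem is therefore to compute this single number.

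First I would use the van Est map $\Phi_{\g}:\cohomo{\bullet}{\mathrm{diff}}(G;L_{\g})\to\cohomo{\bullet}{}(\g;L_{\g})$ to replace the global groupoid cocycle $v$ by its infinitesimal counterpart $\Phi_{\g}(v)$. The point of this step is that the pairing with $K_{0}(\mathcal{A}_{G})$ is computed locally near the units $G^{(0)}\cong M$, where the groupoid is modeled on its Lie algebroid; passing to $\Phi_{\g}(v)$ replaces the homologically global differentiable cocycle by a Chevalley--Eilenberg--type representative visible at the level of $\g$, and hence of the symbol space $\g^{*}$. After this reduction the computation takes place entirely over $\g^{*}$, pulled back along $\pi:\g^{*}\to M$, where the relevant algebroid is the pullback $\pi^{!}\g$ and the symbol defines a class on $\g^{*}$.

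Next I would invoke an algebraic (Fedosov/Nest--Tsygan--type) index theorem for $\pi^{!}\g$. The dual $\g^{*}$ carries a canonical Poisson structure coming from the bracket on $\g$, and one deforms the fiberwise function algebra into a formal Weyl-algebra bundle equipped with a flat Fedosov connection built from an algebroid connection $\LieagbdConn{}{}$. The trace on this deformed algebra produces a trace density, and the algebraic index theorem identifies $\langle\Phi_{\g}(v),\Ind(D)\rangle$ with the integral over $\g^{*}$ of the pulled-back data: the cocycle $\pi^{*}\Phi_{\g}(v)$, the Todd class $\Tdg{\pi^{!}\g}(\pi^{!}\g\otimes\C)$ encoding the curvature of the Fedosov connection, and the Chern character $\chg{\pi^{!}\g}(\sigma(D))$ of the symbol. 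The normalization $(2\pi\sqrt{-1})^{-k}$, with $k=\rank(\g)$, is precisely the one forced by the Weyl-algebra trace, so assembling these pieces yields formula (\ref{EQ-Part 2-(3)-index theorem}).

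The main obstacle is the compatibility step tying the two reductions together: one must verify that the van Est map intertwines the cyclic/trace structures used to define $\Ind_{v}(D)$ on the groupoid side with those governing the algebraic index theorem on the algebroid side, and that the localization of the $K$-theory pairing near $G^{(0)}$ is valid. Establishing this intertwining---essentially that $\Phi_{\g}$ is a morphism of the relevant (co)cyclic complexes compatible with the trace density on $L_{\g}$---is the technical heart of the argument, and it is where unimodularity and compactness of $M$ enter. The remaining identifications of characteristic classes are then formal once the algebraic index theorem is in place.
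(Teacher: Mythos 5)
This theorem is not proved in the paper at all: it is imported verbatim from \cite{PPT2015}, and the surrounding text only explains the notation on the right-hand side, so there is no in-paper argument to compare your proposal against. That said, your outline is a faithful reconstruction of the strategy actually used in the cited source: ellipticity gives a parametrix and hence a class $\Ind(D)\in K_{0}(\mathcal{A}_{G})$, the differentiable cocycle $v$ is converted into a cyclic cocycle paired with that class, the van Est map transports the computation to the algebroid level, and an algebraic index theorem of Fedosov/Nest--Tsygan type over $\g^{*}$ (with its canonical Poisson structure and a Weyl-bundle trace density) produces the characteristic-class integral with the $(2\pi\sqrt{-1})^{-k}$ normalization. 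Be aware, however, that what you have written is a proof \emph{plan}, not a proof: the step you yourself flag as the technical heart --- showing that $\Phi_{\g}$ intertwines the cyclic/trace structures defining $\Ind_{v}(D)$ with those entering the algebraic index theorem, and justifying the localization of the $K$-theory pairing near $G^{(0)}$ --- is precisely where the bulk of \cite{PPT2015} is spent, and nothing in your sketch discharges it. If the intent is to present this theorem with proof rather than by citation, that compatibility argument (and the construction of the trace density on $L_{\g}$ under the unimodularity hypothesis) is what would have to be supplied; as a summary of how the cited proof goes, your account is accurate.
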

%\textcolor{red}{The wedge product of all terms in the integral is in the cohomology class $\cohomo{\bullet}{}(\pi^{!}\g;\pi^{*}L_{\g})$. Since  }

% Next, we introduce the Thom isomorphism on the Lie algebroid. Recall that the classical Thom class for an oriented vector bundle $H\to M$ is the unique cohomology class in $\cohomo{\rank(H)}{}(H,H\backslash M)$ such that its restriction on each fiber $H_{x}$ is the element in $\cohomo{\rank(H)}{}(H_{x},H_{x}\backslash \{0\})$ determined by the orientation. Following this idea, we suppose $H\to M$ is an oriented vector bundle.

In the Atiyah–Singer index theorem, the topological side is an integration for characteristic classes, where the integration is over the cotangent bundle. To simplify the integration, Thom isomorphism, a group isomorphism between the cohomology group of the total space and the base manifold for a vector bundle, can be used to reduce the integration to the base manifold $M$. A similar technique can be applied to a Lie algebroid. For an oriented vector bundle $\pi:H\to M$ and a Lie algebroid $\g\to M$, the pullback Lie algebroid is $(\pi^{!}\g\to H,\rho_{\pi^{!}\g})$. The corresponding \keyword{Lie algebroid Thom class} is
\begin{align}
    \Th^{\g}(H):=\rho^{*}_{\pi^{!}\g} \Th(H)
\end{align}
via the pullback of the classical Thom class $\Th(H)$ through the anchor map $\rho_{\pi^{!}\g}:\pi^{!}\g\to \T H$. Pflaum, Posthuma and Tang showed that the Lie algebroid Thom class has similar properties to the classical Thom class, and applied the Thom isomorphism on the topological side. The following theorem is the simplified version of Theorem \ref{thm-Part 2-(3)-unsimplified index thm for Lie gpod}.
\begin{theorem}[\cite{PPT2015}]
Suppose $(G\rightrightarrows M,s,t,m,u,i)$ is a Lie groupoid where the base manifold $M$ is compact and $(\g, M,\rho)$ is its corresponding Lie algebroid. Suppose $E\to M$ and $F\to M$ are vector bundles and the G-invariant differential operator $D\in\mathcal{U}(\g;E,F)$ is elliptic. Then, the index formula for $D$ is
\begin{align}
    \Ind_{\beta}(D)
    =
    \frac{1}{(2\pi\sqrt{-1})^{k}}
    \int_{M}
    \frac
        {
        \beta
        \wedge
        \Tdg{\g}(\g\otimes\C)
        \wedge
        \big(\chg{\g}(E)-\chg{\g}(F)\big)
        }
        {\euler{\g}(\gstar)}.
\label{EQ-Part 2-(3)-index theorem on Lie algebroid}
\end{align}
\end{theorem}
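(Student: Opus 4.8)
The plan is to deduce this simplified formula from the unsimplified index theorem (Theorem~\ref{thm-Part 2-(3)-unsimplified index thm for Lie gpod}) by integrating along the fibers of the projection $\pi:\gstar\to M$, in direct analogy with the passage from the cotangent-bundle integral to the cohomological formula over the base in the classical Atiyah--Singer theorem. The tool is the Lie algebroid Thom isomorphism for the pullback Lie algebroid $\pi^{!}\g\to\gstar$, whose Thom class $\Th^{\g}(\gstar)=\anchor^{*}_{\pi^{!}\g}\Th(\gstar)$ is the anchor pullback of the classical Thom class. Since the complex structure on $\g$ orients the fibers of $\pi$ and the ellipticity of $D$ makes $\sigma(D)$ invertible off the zero section, the class $\chg{\pi^{!}\g}(\sigma(D))$ has compact vertical support, so its fiberwise pushforward $\pi_{*}$ is defined. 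I will freely use the three properties of the Thom class established in \cite{PPT2015}: that $\pi_{*}$ inverts the Thom isomorphism, that the zero-section restriction satisfies $z^{*}\Th^{\g}(\gstar)=\euler{\g}(\gstar)$ for $z:M\hookrightarrow\gstar$, and the projection formula $\pi_{*}(\pi^{*}\alpha\wedge\gamma)=\alpha\wedge\pi_{*}\gamma$; each transports from its classical counterpart because $\anchor^{*}_{\pi^{!}\g}$ is a cochain map.

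First I would dispose of the two factors that are pulled back from $M$. By naturality of Lie algebroid characteristic classes under the anchor, $\Tdg{\pi^{!}\g}(\pi^{!}\g\otimes\C)=\pi^{*}\Tdg{\g}(\g\otimes\C)$, while $\pi^{*}\Phi_{\g}(v)$ is manifestly a pullback with $\beta:=\Phi_{\g}(v)$. Applying the projection formula then pulls both $\beta$ and $\Tdg{\g}(\g\otimes\C)$ outside the fiber integration, reducing the problem to computing $\pi_{*}\,\chg{\pi^{!}\g}(\sigma(D))$ as a class in $\cohomo{\bullet}{}(\g)$.

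The heart of the argument---and the step I expect to be the main obstacle---is the symbol pushforward identity
\begin{align*}
    \pi_{*}\,\chg{\pi^{!}\g}(\sigma(D))
    =
    \frac{\chg{\g}(E)-\chg{\g}(F)}{\euler{\g}(\gstar)}.
\end{align*}
The clean route is the localization formula $\pi_{*}\omega=z^{*}\omega/\euler{\g}(\gstar)$, valid for any class $\omega$ with compact support along the fibers: such an $\omega$ can be written $\pi^{*}\alpha\wedge\Th^{\g}(\gstar)$ with $\alpha=\pi_{*}\omega$, whence $z^{*}\omega=\alpha\wedge\euler{\g}(\gstar)$ by $z^{*}\pi^{*}=\id$ and $z^{*}\Th^{\g}(\gstar)=\euler{\g}(\gstar)$. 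Applying this to $\omega=\chg{\pi^{!}\g}(\sigma(D))$ and using that the symbol vanishes on the zero section---so the difference complex $\pi^{*}E\to\pi^{*}F$ restricts there to the zero map and $z^{*}\chg{\pi^{!}\g}(\sigma(D))=\chg{\g}(E)-\chg{\g}(F)$---gives the identity. The delicate points are justifying this localization inside the Lie algebroid complex (requiring $\anchor^{*}_{\pi^{!}\g}$ to commute with fiber integration and with zero-section restriction), pinning down that the Euler class of the dual $\gstar$ rather than of $\g$ appears, and verifying that the transversal density $L_{\g}$ threading both $\int_{\gstar}$ and $\int_{M}$ in the well-definedness lemma introduces no stray constant.

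Assembling these steps gives
\begin{align*}
    \Ind_{v}(D)
    =
    \frac{1}{(2\pi\sqrt{-1})^{k}}
    \int_{M}
    \beta\wedge\Tdg{\g}(\g\otimes\C)\wedge
    \frac{\chg{\g}(E)-\chg{\g}(F)}{\euler{\g}(\gstar)},
\end{align*}
which is the claimed formula after relabeling $\Ind_{v}(D)=\Ind_{\beta}(D)$ under $\beta=\Phi_{\g}(v)$. The scalar prefactor $(2\pi\sqrt{-1})^{-k}$ is untouched by the fiberwise pushforward, so it is inherited verbatim from the unsimplified formula, and the order of the wedge factors is immaterial since they have even degree.
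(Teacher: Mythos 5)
Your proposal is correct and follows essentially the same route the paper takes: the paper states this result as a citation of \cite{PPT2015} and, in the preceding paragraphs, describes precisely your argument---apply the Lie algebroid Thom isomorphism $\Th^{\g}(H)=\rho^{*}_{\pi^{!}\g}\Th(H)$ and its classical-style properties to push the topological side of Theorem~\ref{thm-Part 2-(3)-unsimplified index thm for Lie gpod} down from $\gstar$ to $M$, with the symbol class localizing to $\big(\chg{\g}(E)-\chg{\g}(F)\big)/\euler{\g}(\gstar)$. Your identification of the delicate points (compact vertical support from ellipticity, compatibility of $\rho^{*}_{\pi^{!}\g}$ with fiber integration and zero-section restriction, and the formal interpretation of division by the Euler class) matches what \cite{PPT2015} actually has to verify.
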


\subsection{Application of index theory to the Lie algebroid Dolbeault operator}
For a complex Lie algebroid $(\g, M,\rho)$ and a Lie algebroid holomorphic vector bundle $E\to M$, we can define the \keyword{Lie algebroid Dolbeault operator}
\begin{align*}
    \LieagbdPartialbar+\LieagbdPartialbar^{*}:\Apq{p}{even}(M,E)\to\Apq{p}{odd}(M,E),
\end{align*}
where
\begin{align*}
    \Apq{p}{even}(M,E)
    =
    \bigoplus_{\text{q are even}}
    \Apq{p}{q}(M,E)
    \qquad\text{and}\qquad
    \Apq{p}{odd}(M,E)
    =
    \bigoplus_{\text{q are odd}}
    \Apq{p}{q}(M,E)
    .
\end{align*}
The operator is elliptic, so we can apply the index theorem to it.
\begin{theorem}[The Riemann–Roch formula for Lie algebroids]
\label{thm:The Riemann–Roch formula for Lie algebroids}
In the analytical side, if we set $\Phi_{\g}(v)=\Omega_{\g}\in\cohomo{0}{}(\g;L_{\g})$, then the index formula for the Lie algebroid Dolbeault operator is
\begin{align}
    \Ind_{\Omega_{\g}}
    (\LieagbdPartialbar+\LieagbdPartialbar^{*})
    =
    \frac{1}{(2\pi\sqrt{-1})^{k}}
    \int_{M}
    (-1)^{\rank(\g)}
    \bigg\langle
    \chg{\g}({\gstar}^{p,0})
    \chg{\g}(E)
    \Tdg{\g}_{\C}(\g)
    ,
    \Omega_{\g}
    \bigg\rangle.
    \label{EQ-Part 2-(4)-simplified topo index}
\end{align}
\end{theorem}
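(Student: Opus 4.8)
The plan is to specialize the simplified Lie algebroid index theorem quoted above (the second theorem from \cite{PPT2015}, equation \eqref{EQ-Part 2-(3)-index theorem on Lie algebroid}) to the Dolbeault operator $D=\LieagbdPartialbar+\LieagbdPartialbar^{*}$, so that the whole statement becomes a cohomological identity on the right-hand side. First I would check that $D$ is an elliptic $G$-invariant operator: its principal symbol is, up to a scalar, the fibrewise Koszul symbol $\ext{\xi}-\contract{\xi}$ on $\textstyle\bigwedge^{\bullet}\gstarpq{0}{1}$, which is invertible off the zero section, and integrability of $\g$ provides an integrating groupoid $G$ with $D\in\mathcal{U}(\g;E_{+},E_{-})$, where
\begin{align*}
    E_{+}=\bigoplus_{q\ \mathrm{even}}\gstarpq{p}{q}\otimes E,
    \qquad
    E_{-}=\bigoplus_{q\ \mathrm{odd}}\gstarpq{p}{q}\otimes E .
\end{align*}
Taking $\beta=\Phi_{\g}(v)=\Omega_{\g}$ then identifies the left-hand side with $\Ind_{\Omega_{\g}}(D)$.

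Next I would evaluate the bundle data appearing in the formula. Using $\gstarpq{p}{q}=\gstarpq{p}{0}\otimes\bigwedge^{q}\gstarpq{0}{1}$ and multiplicativity of the Lie algebroid Chern character,
\begin{align*}
    \chg{\g}(E_{+})-\chg{\g}(E_{-})
    =
    \chg{\g}(\gstarpq{p}{0})\,\chg{\g}(E)\,
    \sum_{q}(-1)^{q}\chg{\g}\big(\textstyle\bigwedge^{q}\gstarpq{0}{1}\big) .
\end{align*}
The alternating sum is the image, under the Chern--Weil ring homomorphism $\C[\unitaryLalgb{n}]^{\unitarygp{n}}\to\cohomo{\mathrm{even}}{}(\g)$, of the $\lambda$-ring identity $\sum_{q}(-1)^{q}\bigwedge^{q}=\lambda_{-1}$, so that $\sum_{q}(-1)^{q}\chg{\g}(\bigwedge^{q}\gstarpq{0}{1})=\prod_{i}(1-e^{a_{i}})$, where $a_{1},\dots,a_{n}$ with $n=\rank(\g)$ are the Chern roots of $\gpq{1}{0}$; here I use that the conjugation map identifies $\gstarpq{0}{1}$ with $\gpq{1}{0}$, so these exterior powers have roots $a_{i}$.

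It then remains to reduce the scalar factor
\begin{align*}
    \frac{\Tdg{\g}(\g\otimes\C)\ \prod_{i}(1-e^{a_{i}})}{\euler{\g}(\gstar)}
    =
    (-1)^{\rank(\g)}\,\Tdg{\g}_{\C}(\g) .
\end{align*}
Splitting $\Tdg{\g}(\g\otimes\C)=\Tdg{\g}(\gpq{1}{0})\,\Tdg{\g}(\gpq{0}{1})$ and writing all factors in Chern roots (those of $\gpq{0}{1}$ being $-a_{i}$), the numerator $\prod_{i}(1-e^{a_{i}})$ cancels the denominator of $\Tdg{\g}(\gpq{0}{1})$, and after dividing by $\euler{\g}(\gstar)$ the quotient collapses to $\pm\prod_{i}\frac{a_{i}}{1-e^{-a_{i}}}=\pm\Tdg{\g}_{\C}(\g)$, the accumulated sign being $(-1)^{\rank(\g)}$ with the paper's orientation convention. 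Because this is an identity of adjoint-invariant polynomials, it suffices to verify it after diagonalization, exactly as in the Chern--Weil paragraph above. Finally, since $\beta=\Omega_{\g}$ and the integrand is $L_{\g}$-valued, the operation $\tfrac{1}{(2\pi\sqrt{-1})^{k}}\int_{M}\beta\wedge(\,\cdot\,)$ is precisely the pairing $\tfrac{1}{(2\pi\sqrt{-1})^{k}}\int_{M}\langle(\,\cdot\,),\Omega_{\g}\rangle$ justified by the integration lemma above, and assembling the factors yields \eqref{EQ-Part 2-(4)-simplified topo index}.

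I expect the main obstacle to be the sign bookkeeping in the last display: the factor $(-1)^{\rank(\g)}$ arises only from the interaction of the Euler class $\euler{\g}(\gstar)$ in the denominator with the Chern roots of the conjugate bundle $\gpq{0}{1}$, so I would have to fix the orientation and duality conventions and track the roots of $\gpq{1}{0},\gpq{0}{1},\gstarpq{1}{0},\gstarpq{0}{1}$ consistently through conjugation and dualization. A secondary point needing care is verifying that the Koszul symbol of $D$ is genuinely elliptic and $G$-invariant and presents the virtual bundle $E_{+}\ominus E_{-}$, which is what licenses applying the index theorem at the outset.
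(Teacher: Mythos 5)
Your proposal is correct and takes essentially the same route as the paper's own proof: you substitute the virtual bundle $\bigoplus_{q\ \mathrm{even}}\gstarpq{p}{q}\otimes E\ominus\bigoplus_{q\ \mathrm{odd}}\gstarpq{p}{q}\otimes E$ into the simplified index formula (\ref{EQ-Part 2-(3)-index theorem on Lie algebroid}), transfer the classical characteristic-class identity to Lie algebroid cohomology through the Chern--Weil ring homomorphism on adjoint-invariant polynomials, and verify the resulting identity on diagonal matrices via Chern roots, exactly as the paper does. The only cosmetic differences are that you check ellipticity of the Koszul symbol explicitly and phrase the alternating sum as a $\lambda$-ring identity, where the paper simply asserts ellipticity and cites \cite{LM1989}.
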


\begin{proof}
Looking at the topological side of the (\ref{EQ-Part 2-(3)-index theorem on Lie algebroid}), we use $\Phi_{\g}(v)=\Omega_{\g}$ to replace $\beta$, ${\gstar}^{p,0}\otimes(\bigwedge^{even}{\gstar}^{0,1})\otimes E$ to replace $E$, and ${\gstar}^{p,0}\otimes(\bigwedge^{odd}{\gstar}^{0,1})\otimes E$ to replace $F$. Then, the integration becomes
{\small  
  \setlength{\abovedisplayskip}{6pt}
  \setlength{\belowdisplayskip}{\abovedisplayskip}
  \setlength{\abovedisplayshortskip}{0pt}
  \setlength{\belowdisplayshortskip}{3pt}
\begin{align*}
    \frac{1}{(2\pi\sqrt{-1})^{k}}
    \int_{M}
    \Big\langle
        \frac
            {
            \Tdg{\g}(\g\otimes\C)
            \wedge
            \big(
                \chg{\g}
                \big(
                    {\gstar}^{p,0}\otimes(\bigwedge^{even}{\gstar}^{0,1})\otimes E
                \big)
                -
                \chg{\g}
                \big(
                    {\gstar}^{p,0}\otimes(\bigwedge^{odd}{\gstar}^{0,1})\otimes E
                \big)
            \big)
            }
            {\euler{\g}(\gstar)}
        ,
        \Omega_{\g}
    \Big\rangle 
    ,
\end{align*}
}
where $\innerprod{}{}{}$ annihilates $\wedge^{top} \gstar$ and $\wedge^{top} \g$ in the left and right brackets. The fraction of Lie algebroid characteristic classes inside the integration is almost in the same form as the classical index formula for the Dolbeault operator on a complex manifold. We know that the characteristic class equality 
\begin{align}
    \frac
        {
        \Tdg{}_{\C}(\TM{}\otimes\C)
        \wedge
        \bigg(
        \chg{}(\bigwedge^{even}\TM{})
        -
        \chg{}(\bigwedge^{odd}\TM{})
        \bigg)
        }
        {\euler{}(\TM{})}
    =
    (-1)^{\rank(\TM{})}
    \Tdg{}_{\C}(\TM{})
    \label{EQ-Part 2-(4)-classical char eq}
\end{align}
holds on the manifold; the relevant calculations can be found in \cite{LM1989}. However, this equality cannot be generalized directly to Lie algebroids. Recalling the given polynomial equality $h=0\in\C[\unitaryLalgb{n}]^{\unitarygp{n}}$, we obtain $h(E)=0\in\cohomo{even}{}(\g)$ for any Hermitian vector bundle $E$. To generalize (\ref{EQ-Part 2-(4)-classical char eq}) into Lie algebroid characteristic classes, we need to find an $h=0\in\C[\unitaryLalgb{n}]^{\unitarygp{n}}$ such that $h(\TM{})$ is equal to the right-hand side of (\ref{EQ-Part 2-(4)-classical char eq}). Therefore, we need to:
\begin{itemize}
    \item Rewrite $\Tdg{}_{\C}(\TM{}\otimes\C)$, $\chg{}(\bigwedge^{even}\TM{})$, and $\chg{}(\bigwedge^{odd}\TM{})$ as the characteristic classes of the adjoint invariant polynomials $h_1$, $h_2$, and $h_3$ about $\TM{}$, and construct the polynomial $
    h=
    \frac{h_{1}(h_{2}-h_{3})}{\euler{}}
    -
    (-1)^{\rank(\TM{})}\Tdg{}
    $.
    \item Check that the adjoint invariant polynomial $h$ is equal to $0$ by testing the equality about diagonal matrices.
\end{itemize}

We look at $\chg{}(\bigwedge^{even}\TM{})-\chg{}(\bigwedge^{odd}\TM{})$. Using the splitting principle, we write $\TM{}=l_1\oplus\cdots\oplus l_n$ without loss of generality, where $\{l_i\}$ are real line bundles whose corresponding first Chern classes are $\{x_i\}$. The classical result tells us that
\begin{align*}
    \chg{}(\bigwedge^{even}\TM{})
    -
    \chg{}(\bigwedge^{odd}\TM{})
    =
    \prod^{n}_{i=1}
    (1-e^{x_i}),
\end{align*}
which implies it is the charcteristic class of $\TM{}$ associated to the adjoint invariant polynomial 
\begin{align}
    h:
    \begin{pmatrix*}
    \lambda_1 & &
    \\
    &\ddots&
    \\
    &&\lambda_n
    \end{pmatrix*}
    \mapsto
    \prod^{n}_{i=1}
    (1-e^{\lambda_i})
    .
    \label{EQ-Part 2-(4)-poly for ch of alternative}
\end{align}

Next, we look at $\Tdg{}_{\C}(\TM{}\otimes\C)$. The Todd class $\Tdg{}_{\C}(\TM{}\otimes\C)=\Tdg{}_{\C}(\TM{})\Tdg{}_{\C}(\overline{\TM{}})$ since $\TM{}\otimes\C=\TM{}\oplus\overline{\TM{}}$ and $\Tdg{}_{\C}(E\oplus F)=\Tdg{}_{\C}(E)\Tdg{}_{\C}(F)$. Using the splitting principle again, we assume $\TM{}\cong l_1\oplus\cdots\oplus l_n$, where the first Chern class of $l_i$ is $x_i$. Its conjugate bundle is $\overline{\TM{}} \cong \overline{l_1}\oplus\cdots\oplus \overline{l_n}$, and the first Chern class of $\overline{l_i}$ is $-x_i$. Then, the classical result tells us that
\begin{align*}
    \Tdg{}(\overline{\TM{}})
    =
    \prod^{n}_{i=1}
    \frac
        {-x_i}
        {1-e^{x_i}},
\end{align*}
which implies it is the characteristic class of $\TM{}$ associated to the adjoint invariant polynomial
\begin{align}
    f:
    \begin{pmatrix*}
        \lambda_1 & &
        \\
        &\ddots&
        \\
        &&\lambda_n
    \end{pmatrix*}
    \mapsto
    \prod^{n}_{i=1}
    \frac
        {-\lambda_i}
        {1-e^{\lambda_i}}
    .
    \label{EQ-Part 2-(4)-poly for Td TM bar}
\end{align}
According to (\ref{EQ-Part 2-(4)-poly for ch of alternative}) and (\ref{EQ-Part 2-(4)-poly for Td TM bar}), (\ref{EQ-Part 2-(2)-poly for Td}) is the adjoint invariant polynomial for the Todd class while (\ref{EQ-Part 2-(2)-poly for Euler}) is the adjoint invariant polynomial for the Euler class. We can summarize the adjoint invariant polynomial equality corresponding to (\ref{EQ-Part 2-(4)-classical char eq}). Its left-hand side,
\begin{align*}
    \begin{pmatrix*}
        \lambda_1 & &
        \\
        &\ddots&
        \\
        &&\lambda_n
    \end{pmatrix*}
    \mapsto
    \frac
        {
        \prod^{n}_{i=1}
        \frac
            {\lambda_i}
            {1-e^{-\lambda_i}}
        \prod^{n}_{i=1}
        \frac
            {-\lambda_i}
            {1-e^{\lambda_i}}
        \prod^{n}_{i=1}
        1-e^{\lambda_i}
        }
        {
        \prod^{n}_{i=1}
        \lambda_i
        }
        =
        (-1)^{n}
        \frac
            {\lambda_i}
            {1-e^{-\lambda_i}},
\end{align*}
coincides with the right-hand side.
\end{proof}

%%%%%%%%%%%%%%%%%%%%%%%%%%%%%%%%%%%%%%%%%%%%%%%%%%%%%%%%%%%%%%%%%%%%%%%%
%%%%%%%%%%%%%%%%%%%%%%%%%%%%%%%%%%%%%%%%%%%%%%%%%%%%%%%%%%%%%%%%%%%%%%%%
\section{Riemann-Roch formula for regular foliation}
\label{sec:R-R formula for regular foliation}
A foliation structure on a foliated manifold naturally induces a Lie groupoid by holonomy equivalence. This Lie groupoid is called the holonomy groupoid, and its corresponding Lie algebroid is the foliation bundle. Connes constructed an index theory on foliated manifolds. By applying the tangential Dolbeault operator to this index theory, we can derive a Riemann-Roch formula on a foliated manifold. We then compare the difference between the two Riemann-Roch formulas obtained from the Lie algebroid index theory and Connes’ index theory.

\subsection{Introduction of foliated manifolds}
\label{sec:Introduction of foliated manifolds}
An n-dimensional manifold $M$ with a k complex dimension foliation structure is the disjoint union of leaves such that each leaf is a k complex dimension manifold. We use $\leafat{x}$ to denote the leaf containing $x$. Then, the \keyword{foliation bundle} $\framebd{M}$ associated to a foliated manifold $M$ is a subbundle of $TM$ such that at any point $x\in M$, $\framebdat{M}{x}=T_x \leafat{x}$, meaning it is the tangent plane of the leaf $\leafat{x}$ at $x$.

Many notations on manifold have corresponding "foliated version" for foliated manifolds. The notations related to de Rham differential are listed as follows:
\begin{itemize}
    \item The \keyword{tangential de Rham differential} $\tgdiff=\{\diff|_{\leafat{}}\}_{\text{all leaves}} :
    \tgsmoothfuncsp{M}
    \to
    \tgsecsp{M}{\dualframebd{M}}$
    is a collection of classical de Rham differentials on each leaf.
    
    \item The space of k-degree vector bundle valued tangential de Rham differential forms is 
    \begin{align*}
        \tgAk{k}(M,E):=\tgsecsp{M}{\wedge^{\bullet}\dualframebd{M}\otimes E},
    \end{align*}
    where $E\to M$ is a tangential vector bundle.

    \item The cohomology obtained via the chain complex of the tangential de Rham differential is the \keyword{tangential de Rham cohomology} $\tgcohomo{\bullet}(M,E)$. 
\end{itemize}

The notations related to complex structure on foliated manifolds are listed as follows:
\begin{itemize}
    \item The collection of integrable almost complex structures on a complex foliated manifold determines the decomposition
    \begin{align*}
        \framebd{M}\otimes\C
        =
        \{\framebd{M}|_{\leafat{}}\otimes\C\}_{\text{all leaves}}
        =
        \{
        \framebd{M}|_{\leafat{}}^{(1,0)}
        \oplus
        \framebd{M}|_{\leafat{}}^{(0,1)}
        \}_{\text{all leaves}},
    \end{align*}
    where $\{\framebd{M}|_{\leafat{}}^{(1,0)}\}=\{{\tangentbd{\leafat{}}}^{(1,0)}\}$ and $\{\framebd{M}|_{\leafat{}}^{(0,1)}\}=\{{\tangentbd{\leafat{}}}^{(0,1)}\}$ are a collection of \textbf{holomorphic} and \textbf{antiholomorphic tangent bundles} of leaves.

    \item The space of all $(p,q)$-type vector bundle valued tangential differential forms is
    \begin{align*}
        \tgApq{p}{q}(M,E):=\tgsecsp{M}{\framebd{M}^{(p,q)}\otimes E}.
    \end{align*}

    \item The \keyword{tangential $\tgpartialbar$-operator} is
    \begin{align*}
        \tgpartialbar:=
        \pi^{(p,q+1)}\, \tgdiff
        :
        \tgApq{p}{q}(M,E)
        \to
        \tgApq{p}{q+1}(M,E)
        ,
    \end{align*}
    which is equal to $\{\partialbar|_{l}\}_{\text{all leaves}}$, a collection of $\partialbar$-operators restricted on each leaf.
\end{itemize}

Differential operators and connections also have their "tangential version". The related notations are listed as follows:
\begin{itemize}
    \item A \keyword{tangential differential operator} $D:\tgsecsp{M}{E_1}\to\tgsecsp{M}{E_2}$ is a continuous linear operator given by a collection of smooth differential operators along the leaves: $D=\{D|_{\leafat{}}\}_{\text{all leaves}}$, where $E_1$ and $E_2$ are two tangential smooth vector bundles over $M$. 

    \item A \textbf{tangential connection }
    \begin{align*}
        \tgconn{}{}:\tgsecsp{M}{E}\to\tgsecsp{M}{\dualframebdC{M}\otimes E}
    \end{align*}
    can be understood as a smooth collection of classical leafwise connections.

    \item The \keyword{tangential curvature} associated with $\tgconn{}{}$ is
    \begin{align*}
        \tgcurvature(\tgconn{}{}):=
        \tgconn{}{}\,\tgconn{}{}:
        \tgsecsp{M}{E}
        \to
        \tgAk{2}(M,E),
    \end{align*}
    whose images are actually tangential smooth $\End(E)$-valued 2-forms. 
\end{itemize}

The characteristic classes for $E\to M$ can also be constructed by plugging the tangential curvature into an invariant polynomial. We define:
\begin{itemize}
    \item the \keyword{tangential Chern class} $\tgchernclass{E}$ using the invariant polynomial $\det(1+X)$,
    \item the \keyword{tangential Chern character} $\tgcherncharact{E}$ using the invariant polynomial $\Tr(e^{X})$,
    \item the \keyword{tangential Todd class} $\tgTdclass{E}$ using the invariant polynomial $\det(\frac{X}{1-e^{-X}})$,
    \item the \keyword{tangential Euler class} $\tgEulerclass{E}$ as the top degree tangential Chern class.
\end{itemize}
The Chern–Weil theory \cite{MS1974} holds for tangential characteristic classes since the tangential characteristic classes restricted on each leaf are just classical characteristic classes.

%%%%%%%%%%%%%%%%%%%%%%%%%%%%%%%%%%%%

In Connes' index theorem, the topological index is obtained from the integration of the tangential character classes. The tangential characteristic class, however, does not like the classical case containing a volume form of the base manifold. Recall that a top degree tangential differential form restricted on a leaf is a volume form of this leaf, which induces a measure on the leaf. We call this collection of measures a tangential measure and define it as follows.
\begin{definition}
    A tangential measure $\lambda$ is a collection of usual measures $\{\lambda^{\leafat{}}\}_{\text{all leaves}}$ on each leaf.
\end{definition}

We need a tool to integrate tangential measures, which involves introducing the concepts of transversal and transverse measures. 

\begin{definition}
For a foliated manifold $M$, a transversal is a Borel subset $S$ of $M$ which intersects each leaf in a countable set. It is complete if it meets every leaf.
\end{definition}

The set of transversals forms a $\sigma$-ring. It is closed under the countable unions and relative complementation. A transverse measure is a measure on this $\sigma$-ring that measures the transversal direction of a set where the tangential direction does not attend the result. 

\begin{definition}
A measure is quasi-invariant under the equivalence relation $\mathscr{R}$ if a subset $A\subset X$ is a null set if and only if its saturation $\{x\in X| \exists a\in A\, s.t.\, a\sim x\}$ is a null set. A \keyword{transverse measure} $v$ is a measure on the $\sigma$-ring of Borel transversals $\mathscr{S}$ such that $v_{S}$ is $\sigma$-finite for each $S\in\mathscr{S}$ and quasi-invariant under the holonomy equivalence. It is invariant if it is completely determined by its restriction on any complete transversal.
\end{definition}

Give an invariant transverse measure $v$, we can integrate both tangential measures and top degree tangential cohomology classes. More details can be found in \cite{C1982, MS1988}.

% To see how the transverse measure integrates, we set $v$ as a transverse measure, $S$ as a complete transversal, and $\phi$ as a top degree tangential differential form. We aim to use the transversal to represent the space of leaves; the potential issue is that the transversal may meet one leaf several times. To solve this problem, let us assume there exists a Borel function $f:M \to S$ such that $f(x)$ and $x$ are on the same leaf. This function gives a partition on each leaf $\leafat{}$ labeled by $\leafat{}\cap S$; we call this function a \keyword{partition function}. The integration of $\phi$ is
% \begin{align*}
%     \int_{s\in S}
%     \Big(
%     \int_{\leafat{s}}
%     \chi_{f^{-1}(s)}
%     \,
%     \phi|_{\leafat{s}}
%     \Big)
%     \diff v_{S},
% \end{align*}
% where $\leafat{s}$ is the leaf containing $s$, $\chi_{f^{-1}(s)}$ is the characteristic function for the set $f^{-1}(s)$, and $v_{S}$ is the transverse measure on $S$. The top degree tangential form is a volume form when it is restricted on the region $f^{-1}(s)$, so the integration is well defined. The quasi-invariant condition ensures that the integration is independent of the choices of the complete transversals.

\subsection{Riemann-Roch formula for regular foliation}
In the Atiyah–Singer index theory, the analytical index is obtained from an element in the K theory index group $K_{cpt}(TM)$ which is determined by the principal symbol of the elliptic differential operator. This K group is isomorphic to the integer number group. Thus, the analytical index is an integer number which is equal to the difference between the dimension of the kernel and the dimension of the cokernel. Likewise, the analytical index in Connes' index theorem is in the K theory group $K_{0}(C^{*}_r(G(M)))$, the K-theory group in the context of the $C^{*}$-algebra associated with the pair $(G(M),\lambda)$, where $G(M)$ is the holonomy groupoid and $\lambda$ is the tangential measure. The principal symbol of an elliptic tangential differential operator generates an element in this K-group. Then, taking the partial Chern character from $K_{0}(C^{*}_r(G(M)))$ to $\tgcohomo{\bullet}(M)$, we obtain an element in the tangential cohomology class which determines a tangential measure as the analytical index of the elliptic tangential differential operator. We skip the construction of this $C^{*}$-algebra, K-theory, and the calculation process, whose details can be found in \cite{MS1988}.

The tangential measure determined by an elliptic tangential differential operator is a collection of measures on each leaf. Its restriction on each leaf is determined by the projection operator on the kernel of the restriction of the tangential differential operator on the leaf. We skip the discussion of these details \cite{MS1988} and directly report the result.
\begin{definition}
The local index of an elliptic differential operator $D_l$ on a manifold $\leafat{}$ is
\begin{align*}
    \iota_{D_l}:=\mu_{D_l}-\mu_{D^{*}_l},
\end{align*}
which is a signed Radon measure on $\leafat{}$.
\end{definition}
Let $M$ be a foliated manifold foliated by leaves $\{l\}_{\text{all leaves}}$ and $D=\{D_l\}_{\text{all leaves}}$ be an elliptic tangential differential operator on $M$. On each leaf, $D_{l}$ is an elliptic differential operator and $\iota_{D_l}$ is its local index. The collection of local indices forms a tangential measure which is denoted as 
\begin{align*}
    \iota_{D}:=\{\iota_{D_{l}}\}
    .
\end{align*}

\begin{theorem}[Connes’ index theorem]
Suppose $M$ is a compact foliated manifold with leaves of real dimension $k$ and foliation bundle $E$ which is oriented and equipped with a tangentially smooth oriented tangential Riemannian structure. Let $D$ be a tangentially elliptic differential operator on $E$ and $v$ be an invariant transverse measure. The analytical index $\Ind_v(D)$ and the topological index $\Ind^{topo}_v(D)$ are equal, that is
\begin{align*}
    \Ind_v(D)
    =
    \int 
    \iota_{D} 
    \diff v
\end{align*}
and
\begin{align*}
    \Ind^{topo}_v(D)
    =
    \int 
    \tgcherncharact{D}\tgTdclass{M}
    \diff v
    .
\end{align*}
\end{theorem}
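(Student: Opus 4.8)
The plan is to establish the two asserted equalities separately and then combine them, following the measured (heat-kernel) approach to the longitudinal index theorem. The analytic index $\Ind_v(D)$ is by construction the pairing of the K-theory index class in $K_0(C^*_r(G(M)))$ with the trace $\tau_v$ on the foliation von Neumann algebra determined by $v$ together with the leafwise Riemannian measures. First I would make this pairing concrete. Tangential ellipticity of $D$ furnishes a leafwise parametrix, so that $D$ is $v$-Fredholm: the orthogonal projections $P_{\ker D}$ and $P_{\ker D^*}$ onto the leafwise kernels are $\tau_v$-trace class. Writing $\mu_{D_l}$ for the Radon measure on $\leafat{}$ built from the restriction to the diagonal of the Schwartz kernel of the projection onto $\ker D_l$, the family $\{\mu_{D_l}-\mu_{D^*_l}\}$ is exactly the local-index tangential measure $\iota_D$, and disintegrating $\tau_v$ over $v$ yields
\begin{align*}
    \Ind_v(D)=\tau_v(P_{\ker D})-\tau_v(P_{\ker D^*})=\int \iota_D\,\diff v.
\end{align*}
Once the trace-class property is secured, this first identity is essentially bookkeeping.

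For the cohomological formula I would run the measured McKean--Singer argument. For each $t>0$ the leafwise heat operators $e^{-tD^*D}$ and $e^{-tDD^*}$ are smoothing with Gaussian off-diagonal decay; compactness of $M$ and $\sigma$-finiteness of $v$ make them $\tau_v$-trace class, and a spectral comparison shows that the supertrace $\tau_v(e^{-tD^*D})-\tau_v(e^{-tDD^*})$ is independent of $t$ and equals $\Ind_v(D)$. Disintegrating the trace once more, this supertrace equals
\begin{align*}
    \Ind_v(D)=\int_M \mathrm{str}\,k_t(x,x)\,\diff\Lambda_v,
\end{align*}
where $k_t$ is the leafwise heat kernel and $\Lambda_v$ is the measure combining the leafwise Riemannian volume with $v$.

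I would then let $t\to 0$ and invoke the leafwise local index theorem (the Atiyah--Bott--Patodi--Gilkey--Getzler local computation), which applies on each leaf because $D_l$ is a genuine elliptic operator for the induced Riemannian and complex structures. Its content is that the diagonal supertrace density $\mathrm{str}\,k_t(x,x)$ converges, as $t\to 0$, to the top-degree part of the tangential form $\tgcherncharact{D}\,\tgTdclass{M}$. Integrating this pointwise identity against $\Lambda_v$ gives $\Ind_v(D)=\int \tgcherncharact{D}\,\tgTdclass{M}\,\diff v=\Ind^{topo}_v(D)$, which is the desired equality.

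The hard part is the analysis supporting the measured trace rather than the formal algebra. One must show that the leafwise kernel projections and heat kernels depend measurably on the transverse parameter and are uniformly controlled along leaves, so that they represent $\tau_v$-trace-class operators and so that the $t\to 0$ limit commutes with integration against $\Lambda_v$; the uniformity of the Gaussian bounds comes from compactness of $M$, but the transverse measurability and the disintegration of $\tau_v$ are precisely the subtle points in Connes' theory of random operators and the trace on the foliation von Neumann algebra. A more structural alternative avoids these estimates by proving the equality first at the level of K-theory, via the Connes--Skandalis identification of the analytic and topological index maps into $K_0(C^*_r(G(M)))$, and then applying the measured Chern character and pairing with $v$; the full details of either route are in \cite{MS1988} and \cite{CS1984}.
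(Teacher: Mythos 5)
The paper itself gives no proof of this theorem: it explicitly skips the construction of the $C^{*}$-algebra, the K-theory, and the computations, and ``directly report[s] the result'' from \cite{MS1988}. So your proposal has to be judged against the standard literature proof rather than against anything in the paper. Within that framework your first half is fine: identifying $\Ind_v(D)$ with $\int \iota_D \,\diff v$ by disintegrating the von Neumann trace $\tau_v$ over the invariant transverse measure, once the leafwise projections are shown to be $\tau_v$-trace class and transversally measurable, is indeed the content of Connes' ``random operator'' formalism and is essentially bookkeeping, as you say.

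The genuine gap is in your second half. You invoke the Atiyah--Bott--Patodi--Getzler local index theorem leafwise for an \emph{arbitrary} tangentially elliptic operator $D$. That local computation --- the cancellation of the singular terms in the small-time heat asymptotics so that $\mathrm{str}\,k_t(x,x)$ converges pointwise to a characteristic form --- is a theorem about Dirac-type (geometric) operators; for a general elliptic operator there is no such pointwise statement, and indeed the class $\tgcherncharact{D}$ appearing in the theorem is defined in the paper not by any heat-kernel density but through the Thom isomorphism applied to the symbol class, $(-1)^{\frac{k(k+1)}{2}}\phi^{-1}_{\tau}\,\tgcherncharact{\sigma(D)}$. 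The actual proof in \cite{MS1988} (and in \cite{CS1984}) therefore runs K-theoretically: one shows the measured analytic index depends only on the class of $\sigma(D)$ in the K-theory of the foliated space, uses the Thom isomorphism and Bott periodicity (or an embedding argument) to reduce to twisted Dirac/Dolbeault operators, and only then applies the leafwise heat-kernel computation you describe. Your closing remark about the Connes--Skandalis route is precisely this missing reduction, but you offer it as an optional alternative rather than as the step that makes the main argument valid; as written, your main line of proof does not apply to the general tangentially elliptic operators the theorem is actually about.
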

Inside the topological index, $\tgTdclass{M}:=\tgTdclass{\framebd{} M\otimes C}$ is the tangential Todd genus of the manifold M. The Chern characteristic of the tangential elliptic differential operator in the formula is given by
\begin{align*}
    \tgcherncharact{D}
    :=
    (-1)^{\frac{k(k+1)}{2}}
    \phi^{-1}_{\tau}
    \tgcherncharact{\sigma(D)}
    ,
\end{align*}
where $\phi_{\tau}$ is the Thom isomorphism on the foliated manifold and $\tgcherncharact{}$ is the Chern character from the K-group in the tangential cohomology.

%%%%%%%%%%%%%%%%%% apply tangential dolbeault op
Recall that the tangential $\partialbar$-operator on a complex foliated manifold $M$ with a tangential holomorphic vector bundle $E\to M$ induces the tangential Dolbeault operator
 \begin{align*}
     D=\tgpartialbar+\tgpartialbar^{*}:
     \tgApq{p}{even}(M,E)
     \to
     \tgApq{p}{odd}(M,E).
 \end{align*}
We can apply Connes' index theorem to the tangential Dolbeault operator.

\begin{theorem}[The Riemann–Roch formula for regular foliation]
\label{thm:The Riemann–Roch formula for regular foliation}
    Suppose $M$ is a compact complex foliated manifold with leaves of real dimension $2k$ and $E\to M$ is a tangential holomorphic vector bundle which is oriented and equipped with a tangentially smooth oriented tangential Hermitian structure. Connes' index theorem for its tangential Dolbeault operator is stated as follows:
\begin{align}
    \int 
    \iota_{D}
    \diff v
    =
    \int
    (-1)^{k}
    \tgcherncharact{\dualframebd{M}^{(p,0)}}
    \tgcherncharact{E}
    \tgTdclass{M}
    \diff v.
\label{EQ-Foliation and Connes index theorem-index formula for tangential dolbeault op}
\end{align}
\end{theorem}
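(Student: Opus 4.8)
The plan is to apply Connes' index theorem directly to the tangential Dolbeault operator $D=\tgpartialbar+\tgpartialbar^{*}$ and to reduce the identification of its topological side to a leafwise computation that has already been carried out, in polynomial form, in the proof of Theorem \ref{thm:The Riemann–Roch formula for Lie algebroids}. By the definition of the local index $\iota_{D}$, the left-hand side of (\ref{EQ-Foliation and Connes index theorem-index formula for tangential dolbeault op}) is exactly the analytic side $\int\iota_{D}\,\diff v$ of Connes' theorem, so everything reduces to evaluating the topological side $\int\tgcherncharact{D}\,\tgTdclass{M}\,\diff v$. The key structural observation is that $D|_{\leafat{}}$ on each leaf is the classical Dolbeault operator $\partialbar+\partialbar^{*}$ on the complex manifold $\leafat{}$, and every tangential characteristic class as well as the tangential Thom isomorphism $\phi_{\tau}$ restricts leafwise to its classical counterpart. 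Hence the entire argument can be performed on a single leaf and reassembled, the smooth transverse dependence being built into the definition of tangential objects; no analytic input beyond Connes' theorem is needed.

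First I would compute the symbol class $\sigma(D)$. For the $(p,\bullet)$-complex valued in $E$, the principal symbol is the Koszul complex given by wedging with the $(0,1)$-part of a covector, so $\sigma(D)$ is $\dualframebd{M}^{(p,0)}\otimes E$ tensored with the alternating exterior algebra of the antiholomorphic cotangent bundle $\dualframebd{M}^{(0,1)}$. Taking the Chern character and applying the inverse tangential Thom isomorphism $\phi_{\tau}^{-1}$ produces the tangential Euler class $\tgEulerclass{\framebd{M}}$ in the denominator, so that leafwise
\begin{align*}
    \phi_{\tau}^{-1}\,\tgcherncharact{\sigma(D)}
    =
    \frac
        {
        \tgcherncharact{\dualframebd{M}^{(p,0)}}\,
        \tgcherncharact{E}\,
        \Big(
            \tgcherncharact{\bigwedge^{even}\dualframebd{M}^{(0,1)}}
            -
            \tgcherncharact{\bigwedge^{odd}\dualframebd{M}^{(0,1)}}
        \Big)
        }
        {\tgEulerclass{\framebd{M}}}
    .
\end{align*}

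Next I would multiply by the tangential Todd class $\tgTdclass{M}=\tgTdclass{\framebd{M}\otimes\C}$, which leafwise factors as $\Tdg{}(\framebd{M}^{(1,0)})\,\Tdg{}(\overline{\framebd{M}^{(1,0)}})$, and invoke the characteristic-class identity underlying (\ref{EQ-Part 2-(4)-classical char eq}). Since the splitting principle applies on each leaf exactly as in the proof of Theorem \ref{thm:The Riemann–Roch formula for Lie algebroids}, the same diagonal-matrix computation collapses the product of the Euler-class fraction with the complexified Todd class, leaving the holomorphic Todd factor together with a sign $(-1)^{k}$, where $k=\rank(\framebd{M}^{(1,0)})$ is the complex leaf dimension. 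Combining this with the normalization sign of $\tgcherncharact{D}$ from Connes' formula, namely $(-1)^{\frac{(2k)(2k+1)}{2}}=(-1)^{k}$, yields the integrand $(-1)^{k}\,\tgcherncharact{\dualframebd{M}^{(p,0)}}\,\tgcherncharact{E}\,\tgTdclass{M}$. As these equalities hold on every leaf and the underlying bundles, connections, and curvatures vary smoothly transversally, they assemble into an equality of tangential cohomology classes; integrating against the invariant transverse measure $v$ then completes the proof.

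The main obstacle I anticipate is the bookkeeping of the several independent sign sources that must conspire to leave precisely $(-1)^{k}$: the factor $(-1)^{\frac{n(n+1)}{2}}$ with $n=2k$ in Connes' formula for $\tgcherncharact{D}$, the sign concealed in the tangential Euler class that $\phi_{\tau}^{-1}$ introduces, and the sign coming from the Todd/Euler identity. Pinning down the exact normalization of the tangential Thom isomorphism (orientation conventions, and whether the symbol is read on $\framebd{M}$ or $\dualframebd{M}$) so that the leafwise computation reproduces the classical Hirzebruch–Riemann–Roch integrand is the delicate point; once it is fixed, the algebra is identical to the diagonal-matrix verification already used in the Lie algebroid case.
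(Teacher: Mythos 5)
Your proposal is correct and follows essentially the same route as the paper's proof: apply Connes' index theorem to $D=\tgpartialbar+\tgpartialbar^{*}$, compute $\phi_{\tau}^{-1}\tgcherncharact{\sigma(D)}$ leafwise exactly as in the classical Dolbeault case (the alternating sum of Chern characters of $\dualframebd{M}^{(p,0)}\otimes\bigwedge^{\bullet}\dualframebd{M}^{(0,1)}\otimes E$ over the tangential Euler class), collapse this against the Todd class via the same diagonal-matrix identity used for Theorem \ref{thm:The Riemann–Roch formula for Lie algebroids}, and absorb the normalization sign $(-1)^{\frac{2k(2k+1)}{2}}=(-1)^{k}$. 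The only cosmetic difference is order of operations — the paper simplifies the Euler-class fraction first and multiplies by $\tgTdclass{M}$ at the end, whereas you fold the Todd factor into the characteristic-class identity before extracting the sign — but the underlying leafwise computation and its assembly into a tangential class integrated against $v$ are identical.
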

\begin{proof}
    The Chern character of the tangential Dolbeault operator is:
\begin{align}
    (-1)^{\frac{2k(2k+1)}{2}}
    \phi^{-1}_{\tau}
    \tgcherncharact
        {
        \sigma
        \bigg(
        \tgpartialbar+\tgpartialbar^{*}:
        \tgApq{p}{even}(M,E)
        \to
        \tgApq{p}{odd}(M,E)
        \bigg)
        }
    .
\label{EQ-(2)Foliation and Connes index theorem-the first eq in the proof of index for tg Dolbeault op}
\end{align}
The calculation for the case of the tangential Dolbeault operator is analogous to the classical case since the characteristics are defined by the same invariant polynomial. Thus, we can simplify (\ref{EQ-(2)Foliation and Connes index theorem-the first eq in the proof of index for tg Dolbeault op}) as the process in the classical case:
\begin{align*}
    &
    (-1)^{\frac{2k(2k+1)}{2}}
    \frac
        {
        \tgcherncharactBiggp
            {
            \dualframebd{M}^{(p,0)}
            \otimes
            \big(
            \displaystyle\bigoplus_{\text{q is even}} 
            \dualframebd{M}^{(0,q)}
            \big)
            \otimes E
            }
        -
        \tgcherncharactBiggp
            {
            \dualframebd{M}^{(p,0)}
            \otimes
            \big(
            \displaystyle\bigoplus_{\text{q is odd}} 
            \dualframebd{M}^{(0,q)}
            \big)
            \otimes E
            }
        }
        {\tgEulerclass{M}}
    \\
    =&
    (-1)^{k}
    \tgcherncharact{\dualframebd{M}^{(p,0)}}
    \tgcherncharact{E},
\end{align*}
where $(-1)^{\frac{2k(2k+1)}{2}}=(-1)^{k}$. Multiplying by $\tgTdclass{M}$, we obtain the tangential characteristic classes in (\ref{EQ-Foliation and Connes index theorem-index formula for tangential dolbeault op}).
\end{proof}

%%%%%%%%%%%%%%%%%%%%%%%%%%%%%%%%%%%%%%%%%%

\subsection{Difference between Riemann-Roch formula for regular foliation and Lie algebroid}

An important concept for the foliated manifold is the holonomy equivalence. Based on holonomy equivalence, we can define a holonomy Lie groupoid associated to a foliated manifold\cite{W1983}. The Lie algebroid associated with a holonomy Lie groupoid is actually the foliation bundle $\framebd{M}$, whose anchor map is the natural inclusion $\framebd{M}\to\TMC{}$ and the Lie bracket is the natural vector field bracket. Since the natural correspondence between foliation structure and Lie algebroid, we can compare two Riemann-Roch formula canonically.

\begin{proposition}
\label{prop-Connes index theorem-topo side of lie agbd and foliation are same}
    Suppose $M$ is a foliated manifold such that the real dimension of leaves is $k$ and $E$ is a tangential holomorphic vector bundle over $M$. Its foliation structure naturally induces a holonomy Lie groupoid $G(M)$ and the associated Lie algebroid $\g(M)$. The Lie algebroid $\g(M)$ is equal to the foliation bundle $\framebd{M}$; the tangential Dolbeault operator $\tgpartialbar+\tgpartialbar^{*}$ is equal to the Lie algebroid Dolbeault operator $\LieagbdPartialbar+\LieagbdPartialbar^{*}$; and their topological Lie algebroid index and topological Connes' index are equal except for a constant coefficient $(2\pi)^{-k}$ difference.
\end{proposition}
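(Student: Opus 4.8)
The plan is to prove the three asserted identifications in order, each of which reduces to transporting structure along a single bundle isomorphism, and then to isolate the normalization constant as the only genuine computation. First I would identify $\g(M)$ with the foliation bundle $\framebd{M}$. By the definition of the Lie algebroid associated with a Lie groupoid, $\g(M)=\ker(s_{*})$ restricted to $u(M)\cong M$. For the holonomy groupoid the source fiber $s^{-1}(x)$ is the holonomy cover of the leaf $\leafat{x}$, so its tangent space at the unit $x$ maps isomorphically onto $T_x\leafat{x}=\framebd{M}_x$; under this isomorphism the anchor $t_{*}$ becomes the inclusion $\framebd{M}\hookrightarrow\TMC{}$ and the induced bracket is the restriction of the vector field bracket. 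This is the classical holonomy construction \cite{W1983}, which I would cite, checking only that the almost complex structure $J$ on $\g(M)$ coincides with the leafwise almost complex structure, so that the splittings $\gpq{1}{0}\oplus\gpq{0}{1}$ and $\framebd{M}^{(1,0)}\oplus\framebd{M}^{(0,1)}$ agree.

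Second, with $\g(M)=\framebd{M}$ as complex bundles, the dual splittings give $\gstarpq{p}{q}=\dualframebd{M}^{(p,q)}$ and hence $\Apq{p}{q}(M,E)=\tgApq{p}{q}(M,E)$. Because $\LieagbdDiff$ and $\tgdiff$ are defined by the same Koszul/de~Rham formula in terms of the anchor and the bracket, and these now coincide leaf by leaf, the two differentials agree; taking $(0,\bullet{+}1)$-components yields $\LieagbdPartialbar=\tgpartialbar$. Fixing the tangential Hermitian metric as the Lie algebroid Hermitian metric and choosing the transversal density $\Omega$ so that the induced leafwise volume forms agree, the formal adjoints then coincide, giving $\LieagbdPartialbar+\LieagbdPartialbar^{*}=\tgpartialbar+\tgpartialbar^{*}$.

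Third, for the topological indices I would compare the integrands of Theorem~\ref{thm:The Riemann–Roch formula for Lie algebroids} and Theorem~\ref{thm:The Riemann–Roch formula for regular foliation}. Equipping $\framebd{M}$ with a leafwise connection, which is simultaneously a Lie algebroid connection on $\g(M)$, makes the Lie algebroid curvature equal to the tangential curvature $R(\LieagbdConn{}{})$; since both families of classes are produced by inserting this common curvature into the same invariant polynomials $\det(1+X)$, $\Tr(e^{X})$, $\det(X/(1-e^{-X}))$, I obtain $\chg{\g}(\gstarpq{p}{0})=\tgcherncharact{\dualframebd{M}^{(p,0)}}$, $\chg{\g}(E)=\tgcherncharact{E}$, and $\Tdg{\g}_{\C}(\g)=\tgTdclass{M}$ in the common cohomology $\cohomo{\bullet}{}(\g)=\tgcohomo{\bullet}(M)$; in particular the signs $(-1)^{\rank(\g)}$ and $(-1)^{k}$ match. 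I would then reconcile the two integration procedures: setting $\Phi_{\g}(v)=\Omega_{\g}$ realizes the transversal density as the transverse measure $v$, so that $\int_{M}\langle\,\cdot\,,\Omega_{\g}\rangle$ factors, by Fubini along the foliation, as the leafwise integral of the top-degree tangential class followed by $\int dv$, which is exactly Connes' integral.

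The one genuine discrepancy, and the step I expect to be the main obstacle, is the overall scalar: the Lie algebroid formula carries the prefactor $(2\pi\sqrt{-1})^{-k}$ while Connes' formula carries none, yet the claim is that the net difference is the real constant $(2\pi)^{-k}$. This is a bookkeeping problem about normalization. The Chern character entering Connes' index is the topological (normalized) Chern character coming out of $K$-theory and the Thom isomorphism, whereas the Lie algebroid classes here are the unnormalized Chern--Weil expressions $h(R(\LieagbdConn{}{}))$; equivalently, the skew-Hermitian curvature has purely imaginary eigenvalues, so each curvature insertion contributes a factor of $\sqrt{-1}$. I would track these factors degree by degree across the surviving integrand, whose cohomological degree equals the real leaf dimension $k$, and verify that the accumulated powers of $\sqrt{-1}$ cancel against the $(\sqrt{-1})^{-k}$ from the prefactor, leaving precisely $(2\pi)^{-k}$. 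Getting the power of $\sqrt{-1}$ to reconcile exactly, rather than the structural transport along $\g(M)=\framebd{M}$, is the delicate part of the argument.
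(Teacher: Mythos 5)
Your structural transport---identifying $\g(M)$ with $\framebd{M}$, then the forms, differentials, adjoints, connections, and hence the characteristic classes obtained by inserting the common curvature into the common invariant polynomials---is exactly the paper's argument and is fine, as is your Fubini-type matching of $\Omega_{\g}$ with the transverse measure $v$. The genuine gap is in the last step, which is the real content of the proposition: where $(2\pi)^{-k}$ comes from. First, your claim that ``the signs $(-1)^{\rank(\g)}$ and $(-1)^{k}$ match'' is false: in the Riemann--Roch formula for regular foliation (Theorem \ref{thm:The Riemann–Roch formula for regular foliation}, stated there for leaves of real dimension $2k$), the sign is minus one to the \emph{complex} leaf dimension, so for leaves of real dimension $k$ it reads $(-1)^{k/2}$, whereas the Lie algebroid formula carries $(-1)^{k}$; for $k=2$ these are $-1$ and $+1$. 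Second, your proposed resolution of the powers of $\sqrt{-1}$---that Connes' Chern character is the normalized topological one while the Lie algebroid classes are unnormalized Chern--Weil representatives---contradicts both the paper's definitions (the tangential classes are defined by the very same unnormalized polynomials $\det(1+X)$, $\Tr(e^{X})$, $\det(X/(1-e^{-X}))$ as the Lie algebroid ones) and your own third step, where you correctly deduced the literal equality $f(R(\tgconn{}{}))=f(R(\LieagbdConn{}{}))$. If the classes are literally equal and the signs really matched, the two topological indices would differ by $(2\pi\sqrt{-1})^{-k}=(-1)^{k/2}(2\pi)^{-k}$, which is not the real constant asserted.

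The correct bookkeeping, and the paper's, is that the $\sqrt{-1}$'s are absorbed by precisely the sign mismatch you dismissed: since the leaves are complex, $k$ is even and
\begin{align*}
    \frac{1}{(2\pi\sqrt{-1})^{k}}\,(-1)^{k}
    =
    \frac{1}{(2\pi)^{k}}\,(-1)^{k/2},
\end{align*}
so the Lie algebroid integrand equals $(2\pi)^{-k}(-1)^{k/2}$ times the product of characteristic classes, while Connes' integrand is $(-1)^{k/2}$ times the \emph{same} product; integrating both against the transverse measure induced by $\Omega_{\g}$ leaves exactly the factor $(2\pi)^{-k}$. No degree-by-degree tracking of curvature normalizations is needed, and the tracking you outline, resting on a normalization difference that does not exist in this paper's conventions, would not terminate in the stated constant.
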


\begin{proof}
The foliated manifold $M$ has a natural corresponding holonomy Lie groupoid $G(M)$ and the associated Lie algebroid $\g(M)$. The associated Lie algebroid $\g(M)$ is actually the frame bundle of $M$ with the natural inclusion as the anchor map:
\begin{equation*}
    \begin{tikzcd}
        \g(M)
        \arrow[r,equal]
        &
        \framebd{M}
        \arrow[r,"\rho"] \arrow[d]
        &
        \TM{}
        \\
        &
        M
        &
    \end{tikzcd}.
\end{equation*}
In this case, the Lie algebroid differential $\LieagbdDiff$ and the tangential differential $\tgdiff$ are the same since $\g(M)=\framebd{M}$.  The space of tangential differential forms and the Lie algebroid differential forms are also equal: 
\begin{align*}
    \Ak{k}(M)
    =\secsp
        {M}
        {\bigwedge^{k} \dualframebd{M}}
    =
    \tgAk{k}(M)
    .
\end{align*}
The split Lie algebroid differential and tangential differential,
\begin{align*}
   \tgpartial+\tgpartialbar
    =
    \tgdiff
    =
    \LieagbdDiff
    =\LieagbdPartial+\LieagbdPartialbar,
\end{align*}
are equal since they are both defined by the complex structure on the foliation bundle. Thus, a Lie algebrod holomorphic vector bundle $E\to M$ is also a tangential holomorphic vector bundle. Moreover, we can obtain the equality of the space of $E$-valued tangential differential forms and $E$-valued Lie algebroid differential forms:
\begin{align*}
    \Apq{p}{q}(M,E)
    =\secsp
        {M}
        {(\bigwedge^{p} \dualframebd{M}^{(1,0)})
        \otimes
        (\bigwedge^{q}\dualframebd{M}^{(0,1)}) \otimes E}
    =
    \tgApq{p}{q}(M,E)
    .
\end{align*}
The Lie algebroid cohomology $\cohomo{p,q}{}(\g,E)$ and the tangential cohomology $\tgcohomo{p,q}(M,E)$ are both given by the same chain complex,
\begin{equation*}
    \begin{tikzcd}
        0
        \arrow[r]
        &
        \tgApq{p}{0}(M,E)
        \arrow[r,"\tgpartialbar"]
        \arrow[d,equal]
        &
        \cdots
        \arrow[r,"\tgpartialbar"]
        &
        \tgApq{p}{top}(M,E)
        \arrow[r,"\tgpartialbar"]
        \arrow[d,equal]
        &
        0
        \\
        0
        \arrow[r]
        &
        \Apq{p}{0}(M,E)
        \arrow[r,"\tgdiff"]
        &
        \cdots
        \arrow[r,"\tgdiff"]
        &
        \Apq{p}{top}(M,E)
        \arrow[r,"\tgdiff"]
        &
        0
    \end{tikzcd},
\end{equation*}
so the cohomology groups $\cohomo{p,q}{}(\g,E)$ and $\tgcohomo{p,q}(M,E)$ are isomorphic.

By comparing Lie algebroid differential forms and tangential differential forms, we also deduce that a tangential Chern connection $\tgconn{}{}$ is a Lie algebroid Chern connection $\LieagbdConn{}{}$. Therefore, given an invariant polynomial $f$, the corresponding tangential characteristic class $f(R(\tgconn{}{}))$ and the Lie algebroid characteristic class $f(R(\LieagbdConn{}{}))$ are equal. We look at the topological Lie algebroid index formula for the Lie algebroid Dolbeault operator,
\begin{align*}
    \frac{1}{(2\pi\sqrt{-1})^{k}}
    \int_{M}
    (-1)^{k}
    \bigg\langle
    \chg{\g}({\gstar}^{p,0})
    \chg{\g}(E)
    \Tdg{\g}_{\C}(\g)
    ,
    \Omega_{\g}
    \bigg\rangle
    ,
\end{align*}
where $\frac{1}{(2\pi\sqrt{-1})^{k}}(-1)^{k}=\frac{1}{(2\pi)^{k}}(-1)^{\frac{k}{2}}$ and the topological Connes' index formula for the tangential Dolbeault operator is
\begin{align*}
    \int
    (-1)^{\frac{k}{2}}
    \tgcherncharact{\dualframebd{M}^{(p,0)}}
    \tgcherncharact{E}
    \tgTdclass{M}
    \diff v
    .
\end{align*}
Using the transversal measure induced by $\Omega_{\g}$, we can conclude that the two index formulas on a foliated manifold are the same if the coefficient $(2\pi)^{-k}$ difference is not considered.
\end{proof}

%%%%%%%%%%%%%%%%%%%%%%%%%%%%%%%%%%%%%%%%%%%%%%%%%%%%%%%%%%%%%%%%%%%%%%%%
%%%%%%%%%%%%%%%%%%%%%%%%%%%%%%%%%%%%%%%%%%%%%%%%%%%%%%%%%%%%%%%%%%%%%%%%
\section{Applications and examples}
\label{sec:app and exp}
% In Section \ref{sec-app for foliated manifold}, we give the definition of a Kähler and a positive line bundle for a foliated manifold. Then, we focus on the manifold foliated by Riemann surfaces. Using the vanishing theorem, we see that Connes' analytical index is non-negative for $\tgpartialbar+\tgpartialbar^{*}:\tgApq{1}{even}(M,E)\to\tgApq{1}{odd}(M,E)$, where $E$ is a positive line bundle. This is compatible with the topological side where the average Euler characteristic is a non-positive value. Next, we generalize this application to a higher foliation dimension and give a criterion of positivity for a holomorphic line bundle on a Kähler foliated manifold. Finally, we show an example of a Kähler foliated manifold.

The proposition \ref{prop-Connes index theorem-topo side of lie agbd and foliation are same} builds a bridge connecting the analytical side of the Connes' index formula to the topological side of the Lie algebroid index formula. The analytical side in the Riemann-Roch formula on foliated manifolds is determined by the kernel of the tangential Dolbeault operator. In \cite{T2024}, Tengzhou proved Lie algebroid Kodaira vanishing theorem on K\"{a}hler Lie algebroid. Its corolalry asserts the kernel of the Lie algebroid Dolbeault operator on Lie algebroid positive line bundle valued $(p,q)$ forms vanishes for sufficiently large $p+q$. Since a foliation bundle is naturally a Lie algebroid, the vanishing theorem offers a means to control the size of the kernel, thereby influencing the topological properties of a foliated manifold.

\subsection{Lie algebroid Kodaira vanishing theorem}
Recall that the K\"{a}hler manifold is a complex manifold with a Hermitian metric and the corresponding K\"{a}hler form. The smoothly varying positive-definite Hermitian form on each fiber determined by the Hermitian metric is called the \keyword{canonical $(1,1)$-form}, which is called the K\"{a}hler form if it is closed. For the generalization of the K\"{a}hler manifold to Lie algebroids, let us observe a complex Lie algebroid $\g$ with a Hermitian metric
\begin{align*}
    h:\g^{1,0}\otimes\g^{0,1}\to \C
\end{align*}
which determines a smoothly varying positive-definite Hermitian Lie algebroid form on each fiber. We call this form \keyword{the canonical $(1,1)$-Lie algebroid form} (again, corresponding to the Hermitian metric).

\begin{definition}
    A \keyword{K\"{a}hler Lie algebroid} $(\g,M,\rho)$ is a complex Lie algebroid carrying a Hermitian metric on $\g$ whose associated canonical $(1,1)$-Lie algebroid form $\omega\in\Apq{1}{1}(M)$ is closed. We call $\omega$ a \keyword{K\"{a}hler Lie algebroid form}.
\end{definition}

% On an open chart $U$ of $M$, the unitary frame $\{\basexi{i}\}$ spans $\gstarpq{1}{0}|_{U}$ with respect to the Hermitian metric, where the length of this frame is $\norm{\basexi{i}}{h}=2$. The following frames for $\gstarpq{0}{1}|_{U}$, $\gpq{1}{0}|_{U}$ and $\gpq{0}{1}|_{U}$ are induced by $\{\basexi{i}\}$.
% \begin{itemize}
%     \item $\{\basedualxi{i}\}$ spanning $\gstarpq{0}{1}|_{U}$, which is conjugate to $\{\basexi{i}\}$.
%     \item $\{\basev{i}\} \in\secsp{M}{\gpq{1}{0}|_{U}}$ is the dual of $\{\basexi{i}\}$, which spans $\gpq{1}{0}|_{U}$.
%     \item $\{\basedualv{i}\} \in \secsp{M}{\gpq{0}{1}|_{U}}$ is the dual of $\{\basedualxi{i}\}$, which spans $\gpq{0}{1}|_{U}$.
% \end{itemize}

% The expression of the K\"{a}hler Lie algebroid form on an open chart $U$ is
% \begin{align*}
%     \omega|_{U}=\frac{\sqrt{-1}}{2}\sum \basexi{i}\wedge\basedualxi{i},
% \end{align*}
% where $\{\basexi{i}\}$ and $\{\basedualxi{i}\}$ are the frames of $\gstarpq{1}{0}|_{U}$ and $\gstarpq{0}{1}|_{U}$, respectively. This expression is formally the same as the standard K\"{a}hler form on $\C^{n}$ when replacing $\basexi{i}$ with $\diff z_{i}$ and $\basedualxi{i}$ with $\diff \Bar{z}_{i}$.

The notation \keyword{$\LieagbdPartialbar$-Laplacian} is
\begin{align*}
    \LieagbdLpls{}{\LieagbdPartialbar}
    =
    \LieagbdPartialbar^{*}\,\LieagbdPartialbar+\LieagbdPartialbar\,\LieagbdPartialbar^{*}.
\end{align*}
The classical Kodaira vanishing theorem says the kernel of $\partialbar$-Laplacian vanishes on positive line bundle valued $(p,q)$ type for sufficient large $p+q$. To achieve the vanishing result for $\LieagbdPartialbar$-Laplacian, positive line bundles on Lie algebroid are defined as follows.
\begin{definition}
    A Lie algebroid holomorphic line bundle $E\to M$ is \keyword{positive}, if there is a hermitian metric on $E$ such that the curvature $R\in\Apq{1}{1}(M)$ of the Chern connection satisfies that $\sqrt{-1}R$ is positive definite. Meanwhile, we say $E\to M$ is \keyword{negative} if $-\sqrt{-1}R$ is positive definite.
\end{definition}

\begin{theorem}[Lie algebroid Kodaira–Nakano vanishing theorem]
    Give a K\"{a}hler Lie algebroid $\g\to M$ and a Lie algebroid positive line bundle $E\to M$. The kernel of 
    \begin{align*}
        \LieagbdLpls{}{\LieagbdPartialbar}
        :\Apq{p}{q}(M,E)\to\Apq{p}{q}(M,E)
    \end{align*}
    vanishes when $p+q>n$. The same result holds on a negative line bundle under the change of the condition that $p+q<n$.
\end{theorem}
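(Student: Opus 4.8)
The plan is to transport the classical Bochner--Kodaira--Nakano argument to the Kähler Lie algebroid. First I would set up the Lie algebroid Kähler identities relating the Lefschetz operator $L=\omega\wedge(\cdot)$, its formal adjoint $\Lambda$, and the Dolbeault operators, namely $[\Lambda,\LieagbdPartialbar]=-\sqrt{-1}\,\LieagbdPartial^{*}$ and $[\Lambda,\LieagbdPartial]=\sqrt{-1}\,\LieagbdPartialbar^{*}$. Since $\LieagbdPartial$ and $\LieagbdPartialbar$ are given by formulas that are verbatim the classical ones and $\omega$ is $\LieagbdDiff$-closed, these identities should follow fiberwise exactly as on a complex manifold; the genuinely new ingredient is that the formal adjoints are taken with respect to the inner product $\pinnerprod{\cdot}{\cdot}{}$ built from the transversal density $\Omega$ and the Lie algebroid volume form $\innerprod{\omega^{n}/n!}{\Omega}{}$.

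From these identities I would derive the twisted Bochner--Kodaira--Nakano identity
\begin{align*}
    \LieagbdLpls{}{\LieagbdPartialbar}
    =
    \LieagbdLpls{}{\LieagbdPartial}
    +
    [\sqrt{-1}R,\Lambda],
\end{align*}
where $R\in\Apq{1}{1}(M)$ is the curvature of the Chern connection on the line bundle $E$ and $\LieagbdLpls{}{\LieagbdPartial}=\LieagbdPartial\,\LieagbdPartial^{*}+\LieagbdPartial^{*}\,\LieagbdPartial$. Positivity of $E$ lets me choose the Hermitian metric so that $\sqrt{-1}R=\omega$, whereupon $[\sqrt{-1}R,\Lambda]=[L,\Lambda]$, and the purely algebraic $\mathfrak{sl}_{2}$-relation gives $[L,\Lambda]=(p+q-n)\,\id$ on $\Apq{p}{q}(M,E)$, where $n=\rank(\g)$. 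This step is fiberwise linear algebra identical to the classical computation and transfers unchanged.

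Next I would pair the identity with a harmonic form. For $\alpha\in\kernel\,\LieagbdLpls{}{\LieagbdPartialbar}\subset\Apq{p}{q}(M,E)$, applying $\pinnerprod{\LieagbdLpls{}{\LieagbdPartialbar}\alpha}{\alpha}{}=0$ and using that $\LieagbdLpls{}{\LieagbdPartial}$ is nonnegative yields
\begin{align*}
    0
    =
    \norm{\LieagbdPartial\alpha}{}^{2}
    +
    \norm{\LieagbdPartial^{*}\alpha}{}^{2}
    +
    (p+q-n)\,\norm{\alpha}{}^{2}.
\end{align*}
When $p+q>n$ all three summands are nonnegative and the last is strictly positive unless $\alpha=0$, so $\alpha=0$. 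The negative case is identical after choosing $\sqrt{-1}R=-\omega$, which replaces the coefficient by $(n-p-q)$, positive exactly when $p+q<n$.

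The main obstacle is making the formal adjoints and the integration by parts underlying the Kähler identities rigorous in the Lie algebroid setting. Because $\wedge^{\mathrm{top}}\gstar$ carries no intrinsic volume form, the norm $\norm{\cdot}{}$ exists only after pairing with $\Omega$, so I must verify that $\innerprod{\omega^{n}/n!}{\Omega}{}$ is a strictly positive volume form (making $\norm{\cdot}{}$ positive-definite) and that $\LieagbdPartial^{*},\LieagbdPartialbar^{*}$ are true adjoints, which rests on the Stokes-type property of the $L_{\g}$-valued integral recorded in the lemma above. Once positivity of the pairing and the adjunction are secured, the rest is the classical Nakano computation performed fiberwise.
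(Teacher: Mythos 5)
The paper itself contains no proof of this theorem: it is imported from the reference \cite{T2024}, so there is no internal argument to compare against. Your skeleton --- Lie algebroid K\"ahler identities, the twisted Bochner--Kodaira--Nakano identity $\LieagbdLpls{}{\LieagbdPartialbar}=\LieagbdLpls{}{\LieagbdPartial}+[\sqrt{-1}R,\Lambda]$, the algebraic commutator $[L,\Lambda]=(p+q-n)\,\id$, and the pairing with a harmonic section --- is the standard route, and is surely the one followed there. Two of your steps, however, contain genuine gaps.

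First, the claim that the K\"ahler identities ``follow fiberwise exactly as on a complex manifold'' begs the question. These are identities between first-order differential operators, not pointwise algebra; the classical proofs rely on holomorphic normal coordinates in which the metric osculates the flat one to second order. A Lie algebroid admits no such coordinates: the anchor can be non-injective and non-surjective (a K\"ahler Lie algebra over a point, or a foliation, already defeats the coordinate argument). The identities must instead be proved by direct computation in a unitary local frame of $\gpq{1}{0}$, using $\LieagbdDiff\omega=0$ and the structure functions of the bracket, with adjoints controlled through the pairing against $\Omega$ --- which is exactly where unimodularity and the Stokes-type lemma for $L_{\g}$-valued forms enter. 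You correctly flag this as the main obstacle, but flagging it is not discharging it; it is the real content of the theorem.

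Second, and more seriously, ``positivity of $E$ lets me choose the Hermitian metric so that $\sqrt{-1}R=\omega$'' is false as stated. Changing the Hermitian metric on a holomorphic line bundle changes $R$ only by a $\LieagbdDiff$-exact $(1,1)$-term, so $\sqrt{-1}R$ stays in a fixed Lie algebroid cohomology class and cannot be made equal to a prescribed K\"ahler form lying in an unrelated class. The correct move is the reverse substitution: take $\sqrt{-1}R$, which is closed and positive, as a new K\"ahler Lie algebroid form. But that changes the Hermitian structure on $\g$, hence the adjoints, hence the operator $\LieagbdLpls{}{\LieagbdPartialbar}$ whose kernel the theorem concerns. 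Classically one transfers the vanishing back to an arbitrary K\"ahler metric through Hodge theory (harmonic spaces compute Dolbeault cohomology, which is metric-independent); here that bridge is unavailable, because $\LieagbdLpls{}{\LieagbdPartialbar}$ is elliptic only in the Lie algebroid sense (leafwise elliptic in the foliation case), not elliptic on $M$, and its kernel need not be identified with any metric-independent cohomology. Nor can you keep the original $\omega$ and argue directly: in a frame diagonalizing $\sqrt{-1}R$ relative to $\omega$ with eigenvalues $\lambda_1,\dots,\lambda_n>0$, the operator $[\sqrt{-1}R,\Lambda]$ acts on a $(p,q)$-multi-index $(J,K)$ by $\sum_{j\in J}\lambda_j+\sum_{k\in K}\lambda_k-\sum_{l=1}^{n}\lambda_l$, which for $n=3$, $(p,q)=(2,2)$ and eigenvalues $(1,1,10)$ equals $-8<0$ even though $p+q>n$. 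So your argument, once repaired, proves the vanishing for the Laplacian attached to the K\"ahler structure $\omega=\sqrt{-1}R$; to obtain the statement as literally quantified (an arbitrary K\"ahler Lie algebroid together with a positive line bundle) an additional idea is needed, or the theorem must be read with that particular choice of K\"ahler structure built in.
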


A quick corollary appear since the kernel of $\LieagbdLpls{}{\LieagbdPartialbar}$ and $\LieagbdPartialbar+\LieagbdPartialbar^{*}$ are equal, which will be applied to the analytic index in Connes' index formula.
\begin{corollary}
\label{cor-vanishign theorem for Dolbeault}
Suppose $M\to\g$ is a K\"{a}hler Lie algebroid and $E\to M$ a positive Lie algebroid holomorphic line bundle. The kernel of 
\begin{align*}
    \LieagbdPartialbar+\LieagbdPartialbar^{*}
    :\Apq{p}{q}(M,E)
    \to
    \Apq{p}{q+1}(M,E)
    \oplus
    \Apq{p}{q-1}(M,E)
\end{align*}
vanishes when $p+q>n$. The same result holds on a negative line bundle under the change of the condition that $p+q<n$.
\end{corollary}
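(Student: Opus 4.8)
The plan is to reduce the statement to the Lie algebroid Kodaira–Nakano vanishing theorem proved just above, using the elementary Hodge-theoretic identity $\kernel(\LieagbdPartialbar+\LieagbdPartialbar^{*})=\kernel\,\LieagbdLpls{}{\LieagbdPartialbar}$ on the compact base $M$. First I would exploit the bidegree decomposition: for $\alpha\in\Apq{p}{q}(M,E)$ the two summands $\LieagbdPartialbar\alpha\in\Apq{p}{q+1}(M,E)$ and $\LieagbdPartialbar^{*}\alpha\in\Apq{p}{q-1}(M,E)$ live in distinct components of the bigraded module and are therefore linearly independent. Consequently $(\LieagbdPartialbar+\LieagbdPartialbar^{*})\alpha=0$ forces $\LieagbdPartialbar\alpha=0$ and $\LieagbdPartialbar^{*}\alpha=0$ separately, giving $\kernel(\LieagbdPartialbar+\LieagbdPartialbar^{*})=\kernel\LieagbdPartialbar\cap\kernel\LieagbdPartialbar^{*}$.

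Next I would establish $\kernel\,\LieagbdLpls{}{\LieagbdPartialbar}=\kernel\LieagbdPartialbar\cap\kernel\LieagbdPartialbar^{*}$. The inclusion $\supseteq$ is immediate from $\LieagbdLpls{}{\LieagbdPartialbar}=\LieagbdPartialbar^{*}\LieagbdPartialbar+\LieagbdPartialbar\LieagbdPartialbar^{*}$. For $\subseteq$, I would pair $\LieagbdLpls{}{\LieagbdPartialbar}\alpha$ with $\alpha$ in the Lie algebroid inner product $\pinnerprod{\cdot}{\cdot}{}$ introduced in the preliminaries, and use the defining adjoint property of $\LieagbdPartialbar^{*}$ together with compactness of $M$ (so that no boundary terms arise) to obtain
\begin{align*}
    \pinnerprod{\LieagbdLpls{}{\LieagbdPartialbar}\alpha}{\alpha}{}
    =
    \pinnerprod{\LieagbdPartialbar\alpha}{\LieagbdPartialbar\alpha}{}
    +
    \pinnerprod{\LieagbdPartialbar^{*}\alpha}{\LieagbdPartialbar^{*}\alpha}{}.
\end{align*}
If $\LieagbdLpls{}{\LieagbdPartialbar}\alpha=0$, the left-hand side vanishes; since the right-hand side is a sum of squared norms with respect to a positive-definite inner product, both $\LieagbdPartialbar\alpha$ and $\LieagbdPartialbar^{*}\alpha$ must vanish. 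Combining the two identities yields $\kernel(\LieagbdPartialbar+\LieagbdPartialbar^{*})=\kernel\,\LieagbdLpls{}{\LieagbdPartialbar}$.

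With this identification in hand the corollary follows directly: the Lie algebroid Kodaira–Nakano vanishing theorem gives $\kernel\,\LieagbdLpls{}{\LieagbdPartialbar}=0$ on $\Apq{p}{q}(M,E)$ whenever $p+q>n$, hence $\kernel(\LieagbdPartialbar+\LieagbdPartialbar^{*})=0$ in the same range. The negative line bundle case is obtained verbatim from the corresponding half of that theorem, replacing the condition by $p+q<n$.

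The only delicate point, and the step I would scrutinize most, is the positivity needed to pass from the displayed identity to the vanishing of both differentials. This rests on the Lie algebroid inner product $\pinnerprod{\cdot}{\cdot}{}$ being genuinely positive-definite, which requires $\innerprod{\omega^{n}/n!}{\Omega}{}$ to be a strictly positive volume form on $M$ and the formal adjoint relation defining $\LieagbdPartialbar^{*}$ to hold without boundary contributions. Both facts are already built into the K\"{a}hler and unimodular hypotheses underlying the vanishing theorem, so no new analytic input is required; granted that theorem, the argument here is purely formal.
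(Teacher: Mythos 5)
Your proposal is correct and follows essentially the same route as the paper: the paper derives the corollary precisely from the identity $\kernel(\LieagbdPartialbar+\LieagbdPartialbar^{*})=\kernel\,\LieagbdLpls{}{\LieagbdPartialbar}$ (stated there without proof), and your bidegree-splitting plus inner-product argument is exactly the standard justification of that identity. Your closing caveat about positive-definiteness of $\pinnerprod{\cdot}{\cdot}{\Ak{k}}$ and the unimodularity/compactness hypotheses is apt but, as you note, already subsumed in the hypotheses of the vanishing theorem.
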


\subsection{Application of index theory and the vanishing theorem for foliated K\"{a}hler manifolds}

Let $M$ be a complex foliated manifold. Suppose that each leaf of $M$ is equipped with a K\"{a}hler structure and the leafwise K\"{a}hler structures vary smoothly. That is saying, the collection of leafwise K\"{a}hler form $\{\omega_{l}\}_{\text{all leaves}}$ forms a tangential $(1,1)$-form
\begin{align*}
    \tgomega=\{\omega_{l}\}_{\text{all leaves}}\in\tgApq{1}{1}(M).
\end{align*}
We denote $\tgomega$ as the \keyword{tangential K\"{a}hler form} of $M$. It is easy to see that the tangential  K\"{a}hler form is $\tgdiff$-closed. Conversely, if there is a $\tgdiff$-closed $(1,1)$-form on $M$, then its restriction on each leaf is a classical K\"{a}hler form, making each leaf a K\"{a}hler manifold.
\begin{definition}
A complex foliated manifold $M$ is K\"{a}hler if it has a tangential K\"{a}hler form $\tgomega$. We call such a manifold a K\"{a}hler foliated manifold.
\end{definition}

The foliation bundle $\framebd{M}\to M$ of a K\"{a}hler foliated manifold is a K\"{a}hler Lie algebroid. Under the case of $\g=\framebd{M}$, the terminologies for a Lie algebroid are equivalent to the terminologies for a foliated manifold. For example, $\Apq{p}{q}(M)$ and $\tgApq{p}{q}(M)$ are the same, $\LieagbdDiff$ and $\tgdiff$ are the same, $\LieagbdPartialbar$ and $\tgpartialbar$ are the same. Suppose $E\to M$ is a Hermitian vector bundle and $\LieagbdChernConn{}{}$ is its Lie algebroid Chern connection. Then, $\LieagbdChernConn{}{}$ is also a tangential connection. Moreover, it is compatible with the Hermitian metric of $E$ and its $(0,1)$ part is equal to $\tgpartialbar$. We denote this connection as the \keyword{tangential Chern connection}. Using this terminology, we define a tangential positive line bundle as follows. 
\begin{definition}
A tangential holomorphic line bundle $E\to M$ is \keyword{positive} if there is a Hermitian metric on $E$ such that the tangential curvature $\Theta$ of the corresponding tangential Chern connection satisfying that $\sqrt{-1}\Theta$ is positive definite. Meanwhile, the line bundle is \keyword{negative} if $-\sqrt{-1}\Theta$ is positive definite.
\end{definition}

Let $E\to M$ be negative. Since the operator $\tgpartialbar$ is equal to $\LieagbdPartialbar$ under this case, Corollary \ref{cor-vanishign theorem for Dolbeault} directly gives that the kernel of
\begin{align*}
    (\tgpartialbar+\tgpartialbar^{*}):
    \tgApq{p}{q}(M,E)
    \to
    \tgApq{p}{q}(M,E)
\end{align*}
is trivial for $p+q<n$. In particular, for $(p,q)=(0,0)$, the Dolbeault operator $\tgpartialbar+\tgpartialbar^{*}=\tgpartialbar$ since q is in the lowest degree. The kernel of the tangential Dolbeault operator is then equal to the set of the tangential holomorphic section. Thus, we reach the following result.
\begin{proposition}
There is no $\tgpartialbar$-holomorphic section on the tangential negative line bundle.
\end{proposition}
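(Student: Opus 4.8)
The plan is to obtain the statement as the lowest-degree special case of the Lie algebroid Kodaira--Nakano vanishing theorem, transported to the tangential setting. Since $M$ is a K\"{a}hler foliated manifold, its foliation bundle $\framebd{M}$ is a K\"{a}hler Lie algebroid and the tangential operator $\tgpartialbar$ coincides with $\LieagbdPartialbar$; a tangential negative line bundle is therefore negative in the Lie algebroid sense. I would then invoke Corollary \ref{cor-vanishign theorem for Dolbeault} in its negative form, which asserts that the kernel of $\LieagbdPartialbar+\LieagbdPartialbar^{*}=\tgpartialbar+\tgpartialbar^{*}$ acting on $\tgApq{p}{q}(M,E)$ vanishes whenever $p+q<n$.

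First I would specialize to bidegree $(p,q)=(0,0)$. Because each leaf has positive complex dimension we have $n\geq 1$, so $p+q=0<n$ and the hypothesis of the corollary is satisfied; hence the kernel of $\tgpartialbar+\tgpartialbar^{*}$ on $\tgApq{0}{0}(M,E)=\tgsecsp{M}{E}$ is trivial. Next I would observe that at the bottom antiholomorphic degree the adjoint piece is automatically zero: $\tgpartialbar^{*}$ would send a section into $\tgApq{0}{-1}(M,E)$, which is the zero space, so on $\tgsecsp{M}{E}$ the combined operator reduces to $\tgpartialbar$ alone and $\ker(\tgpartialbar+\tgpartialbar^{*})=\ker\tgpartialbar$.

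Finally I would identify $\ker\tgpartialbar$ on $\tgsecsp{M}{E}$ with the space of tangential holomorphic sections, which is immediate from the definition of such a section as one annihilated by $\tgpartialbar$. Combined with the vanishing just established, this forces the only tangential holomorphic section to be the zero section, which is precisely the claim.

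The argument is essentially a direct corollary, so no step constitutes a genuine obstacle; the analytic content is entirely carried by the vanishing theorem cited above. The only points requiring attention are bookkeeping: confirming the reduction of $\tgpartialbar+\tgpartialbar^{*}$ to $\tgpartialbar$ at degree $(0,0)$ (so that a kernel element of the combined operator is automatically $\tgpartialbar$-closed), and keeping the standing assumption $n\geq 1$ explicit, since without it the inequality $p+q<n$ would be vacuous and the conclusion empty.
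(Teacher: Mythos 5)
Your proposal is correct and follows essentially the same route as the paper: both deduce the statement from Corollary \ref{cor-vanishign theorem for Dolbeault} in its negative form, specialize to bidegree $(0,0)$ where $\tgpartialbar^{*}$ vanishes so the Dolbeault operator reduces to $\tgpartialbar$, and identify its kernel with the tangential holomorphic sections. Your explicit remarks on $n\geq 1$ and on why $\tgpartialbar^{*}=0$ at the bottom degree are just careful bookkeeping of steps the paper leaves implicit.
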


%%%%%%%%%%%%%%%%%%%%%%%%%%%%%% focus on Riemann surface leaf %%%%%%%%%%%%%%%%%%%%%%%%%%%%%%
We now focus on a foliated K\"{a}hler manifold $M$ that carries a positive line bundle $E$ over $M$ and whose leaves are Riemann surfaces. Because of positivity, the kernel
\begin{align*}
    \kernel(\Delta_{\tgpartialbar}:\tgApq{1}{1}(M,E)\to\tgApq{1}{1}(M,E))
    =
    \kernel(\tgpartialbar+\tgpartialbar^{*}:\tgApq{1}{1}(M,E)\to\tgApq{1}{0}(M,E))
\end{align*}
is trivial. We can apply this result to the analytical index part in (\ref{EQ-Foliation and Connes index theorem-index formula for tangential dolbeault op}), the Riemann–Roch formula for singular foliation. For the case $p=1$, the analytical index is
\begin{align}
    \int 
    \iota_{\tgpartialbar+\tgpartialbar^{*}}
    \diff v
    &=
    \int 
    \mu_{\tgpartialbar+\tgpartialbar^{*}
    :\tgApq{1}{0}(M,E)\to\tgApq{1}{1}(M,E)}
    \diff v
    -
    \int
    \mu_{(\tgpartialbar+\tgpartialbar^{*})^{*}
    :\tgApq{1}{1}(M,E)\to\tgApq{1}{0}(M,E)}
    \diff v
\notag
    \\
    &=
    \int
    \mu_{(\tgpartialbar+\tgpartialbar^{*})^{*}
    :\tgApq{1}{0}(M,E)\to\tgApq{1}{1}(M,E)}
    \diff v
\notag
    \\
    &\geq 0
    .
\label{EQ-foliated manifold-analytic index for p=1}
\end{align}

Next, we look at the topological index part in (\ref{EQ-Foliation and Connes index theorem-index formula for tangential dolbeault op}), 
\begin{align*}
    \int
    (-1)^{1}
    \tgcherncharact{\dualframebd{M}^{(p,0)}}
    \tgcherncharact{E}
    \tgTdclass{M}
    \diff v
    .
\end{align*}
The calculation of the tangential characteristic class is analogous to the classical case. The formula of $\tgcherncharact{E}\tgTdclass{M}$ in terms of the first-degree tangential Chern class is
\begin{align*}
    1+\tgdegreechernclass{E}{1}
    +
    \frac{1}{2}\tgdegreechernclass{\framebd{M}}{1}
    .
\end{align*}
Similarly, the formula of $\tgcherncharact{\dualframebd{M}^{(1,0)}}$ in terms of the first-degree tangential Chern class is
% \begin{align*}
%     \tgcherncharact{\dualframebd{M}^{(p,0)}}
%     =
%     \begin{cases}
%         1 & p=0     \\
%         1-\tgdegreechernclass{\framebd{M}}{1} & p=1\\
%     \end{cases}
%     .
% \end{align*}
\begin{align*}
    1-\tgdegreechernclass{\framebd{M}}{1}.
\end{align*}
Thus, we conclude that the topological index part for $p=1$ is
\begin{align}
    -\int
    \bigg(
    \tgdegreechernclass{E}{1}
    -
    \frac{1}{2}\tgdegreechernclass{\framebd{M}}{1}
    \bigg)
    \diff v
    .
\label{EQ-foliated manifold-topological index for p=1}
\end{align}

The degree-one tangential Chern class $\tgdegreechernclass{\framebd{M}}{1}$ is in top degree since the complex foliation dimension of $M$ is 1. Therefore, $\tgdegreechernclass{\framebd{M}}{1}$ is also the tangential Euler class. We define its integration with respect to a tranversal measure as the \keyword{average Euler character}
\begin{align*}
    \chi(M):=\int \tgEulerclass{\framebd{M}} \diff v.
\end{align*}
A quick result can be obtained by taking $\framebd{M}^{(1,0)}$ as $E$. Plugging $E=\framebd{M}^{(1,0)}$ into (\ref{EQ-foliated manifold-topological index for p=1}), the topological index part for $p=1$ is equal to
% \begin{align*}
%     \begin{cases}
%         \frac{3}{2}\chi & p=0     \\
%         -\frac{1}{2}\chi & p=1\\
%     \end{cases}
% \end{align*}
\begin{align}
    -\frac{1}{2}\chi.
\label{EQ-foliated manifold-topological index for p=1, using euler character}
\end{align}
Then, associating this to (\ref{EQ-foliated manifold-analytic index for p=1}), the average Euler character is
\begin{align*}
    \chi \leq 0
    .
\end{align*}
This fits the topological picture of the manifold foliated by Riemann surfaces, in which compact leaves have a non-positive Euler character and non-compact leaves do not contribute to the average Euler character.

Recall that the Euler character of a compact Riemann surface of $g$-genus is $2-2g$. Then, the average Euler character is strictly negative when the set of compact leaves of a Riemann surface whose genus is larger than one is not a transversal measure null set. Using (\ref{EQ-foliated manifold-analytic index for p=1}) and (\ref{EQ-foliated manifold-topological index for p=1, using euler character}), we obtain the following proposition.
\begin{proposition}
Let $M$ be a manifold foliated by a Riemann surface. If the set of compact leaves of the Riemann surface whose genus is larger than one is not a transverse measure null set, then the kernel of
\begin{align*}
    \tgpartialbar:\tgApq{1}{0}(M,\framebd^{1,0})\to\tgApq{1}{1}(M,\framebd^{1,0})
\end{align*}
is non-trivial.
\end{proposition}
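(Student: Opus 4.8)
The plan is to read off both sides of Connes' index theorem for the tangential Dolbeault operator $D=\tgpartialbar+\tgpartialbar^{*}$ acting on $\tgApq{1}{\bullet}(M,\framebd^{1,0})$ and to extract information about its kernel from the sign of the topological index. Because the leaves are Riemann surfaces ($k=1$), the only bidegrees present are $(1,0)$ and $(1,1)$, so the even-to-odd operator is $D:\tgApq{1}{0}(M,\framebd^{1,0})\to\tgApq{1}{1}(M,\framebd^{1,0})$; and since the adjoint part would lower the antiholomorphic degree below zero on $(1,0)$-forms, $\tgpartialbar^{*}$ vanishes on the source and $D$ coincides with $\tgpartialbar$ there. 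Thus the kernel named in the statement is exactly $\ker D$, and its transverse-measure (von Neumann) dimension is $\int\mu_{D}\diff v$, the first summand of the local index $\iota_{D}=\mu_{D}-\mu_{D^{*}}$.

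First I would invoke the Riemann--Roch formula for regular foliation (Theorem~\ref{thm:The Riemann–Roch formula for regular foliation}) with $p=1$ and $E=\framebd^{1,0}$, together with the computation already carried out in (\ref{EQ-foliated manifold-topological index for p=1, using euler character}), to identify the topological index with $-\tfrac{1}{2}\chi(M)$, where $\chi(M)=\int\tgEulerclass{\framebd{M}}\diff v$ is the average Euler character. Connes' index theorem then equates this with the analytic index $\int\iota_{D}\diff v=\int\mu_{D}\diff v-\int\mu_{D^{*}}\diff v$.

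Next I would show that the hypothesis forces $\chi(M)<0$ strictly. On a compact leaf of genus $g$ the tangential Euler class integrates to $2-2g$, which is strictly negative exactly when $g>1$; non-compact leaves contribute nothing to the average, while genus-one leaves contribute zero. Provided the positive contributions of any spherical leaves are controlled---which is precisely the content of the bound $\chi(M)\le 0$ recorded in (\ref{EQ-foliated manifold-analytic index for p=1})---the strictly negative contribution of the genus-$>1$ leaves on a set of positive transverse measure is not cancelled, so $\chi(M)<0$. Consequently the topological index $-\tfrac{1}{2}\chi(M)$ is strictly positive, and combining with $\int\mu_{D^{*}}\diff v\ge 0$ yields
\begin{align*}
    \int\mu_{D}\diff v
    \;\ge\;
    \int\iota_{D}\diff v
    \;=\;
    -\tfrac{1}{2}\chi(M)
    \;>\;0,
\end{align*}
so the transverse-measure dimension of $\ker\big(\tgpartialbar:\tgApq{1}{0}(M,\framebd^{1,0})\to\tgApq{1}{1}(M,\framebd^{1,0})\big)$ is positive and the kernel is therefore non-trivial.

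The main obstacle I anticipate is the strict-negativity step: one must guarantee that positive Euler-characteristic contributions from spherical leaves do not cancel the negative contributions of the higher-genus leaves. This is where the non-positivity bound $\chi(M)\le 0$ and the non-null hypothesis on genus-$>1$ leaves must be combined carefully, rather than in the routine matching of the two index sides. A secondary point requiring attention is the non-negativity of $\int\mu_{D^{*}}\diff v$, which is exactly what allows a strictly positive analytic index to be attributed to $\ker D$ alone.
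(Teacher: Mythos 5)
Your proof follows the paper's own route essentially step for step: you identify the even-to-odd Dolbeault operator with $\tgpartialbar$ on $(1,0)$-forms, specialize Theorem \ref{thm:The Riemann–Roch formula for regular foliation} to $p=1$, $E=\framebd{M}^{(1,0)}$ to recover the topological index $-\tfrac{1}{2}\chi(M)$ of (\ref{EQ-foliated manifold-topological index for p=1, using euler character}), and convert a strictly positive index into non-triviality of $\ker\tgpartialbar$ using $\int\mu_{D^{*}}\,\diff v\ge 0$. In fact your handling of this last point is more careful than the paper's: the paper cites (\ref{EQ-foliated manifold-analytic index for p=1}), whose vanishing of the $\mu_{D^{*}}$-term rests on Kodaira vanishing for a \emph{positive} line bundle and fails for $E=\framebd{M}^{(1,0)}$ (on a genus-$g$ leaf the space of $T^{1,0}l$-valued harmonic $(1,1)$-forms is $H^{1}(l,\mathcal{O}_{l})\cong\C^{g}$), whereas you use only the inequality $\int\mu_{D^{*}}\,\diff v\ge0$, which is all the argument needs.

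The genuine gap is your strict-negativity step $\chi(M)<0$. You argue that the global bound $\chi(M)\le 0$ ``controls'' the positive contributions of spherical leaves so that the genus-$>1$ contribution cannot be cancelled. That inference is invalid: a bound on the total integral is perfectly consistent with exact cancellation, i.e.\ with $\chi(M)=0$, in which case the topological index vanishes and nothing follows about $\ker D$. (It is also shaky to source the bound from (\ref{EQ-foliated manifold-analytic index for p=1}), since that inequality was derived under a positivity hypothesis that $\framebd{M}^{(1,0)}$ does not satisfy.) What is actually needed, and what the paper asserts instead, is the leafwise statement: no leaf contributes positively --- compact leaves have non-positive Euler characteristic and non-compact leaves contribute nothing to the average --- so that a non-null set of compact genus-$>1$ leaves forces $\chi(M)<0$ outright. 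Ruling out positive (spherical) contributions is genuinely leafwise, topological input (for instance, by Reeb stability the set of spherical leaves in a connected foliated manifold is open and closed, so the presence of a genus-$>1$ leaf excludes spherical leaves altogether); it cannot be recovered from the sign of the total integral, which is the only thing your argument invokes.
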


We now look at the general case and give a criterion of positivity of a tangential holomorphic line bundle on a K\"{a}hler foliated manifold by using topological information of the manifold and the line bundle.
\begin{proposition}
Let $M$ be a K\"{a}hler foliated manifold with leaves of real dimension $2k$ and $E\to M$ be a tangential line bundle. The line bundle $E$ is not positive if the integration of the characteristic classes
\begin{align*}
    (-1)^{k}
    \tgcherncharact{\dualframebd{M}^{(n,0)}}\,
    \tgcherncharact{E}\,
    \tgTdclass{M}
\end{align*}
is negative.
\end{proposition}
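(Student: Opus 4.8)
The plan is to rerun, at the top holomorphic bidegree $p=n$ (where $n=k$ is the complex dimension of the leaves), the contrapositive argument already carried out for Riemann surfaces, extracting positivity of the analytic index from the Lie algebroid Kodaira–Nakano vanishing theorem. I would apply Connes' Riemann–Roch formula for regular foliation (Theorem \ref{thm:The Riemann–Roch formula for regular foliation}) with $p=n$, so that the topological index is precisely the integral of $(-1)^{k}\tgcherncharact{\dualframebd{M}^{(n,0)}}\tgcherncharact{E}\tgTdclass{M}$ against the invariant transverse measure. It then suffices to show that when $E$ is positive this integral is $\geq 0$; the proposition is exactly the contrapositive of that implication.

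First I would pin down the relevant harmonic spaces. Consider the tangential Dolbeault operator $D=\tgpartialbar+\tgpartialbar^{*}:\tgApq{n}{even}(M,E)\to\tgApq{n}{odd}(M,E)$, whose kernel in each bidegree $(n,q)$ is the space of $\Delta_{\tgpartialbar}$-harmonic $E$-valued forms. Since $M$ is Kähler foliated, $\framebd{M}$ is a Kähler Lie algebroid and $E$ is a positive Lie algebroid line bundle, so Corollary \ref{cor-vanishign theorem for Dolbeault} applies: the kernel of $D$ on $\tgApq{n}{q}(M,E)$ vanishes whenever $n+q>n$, i.e. for every $q\geq 1$. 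Hence among the bidegrees $(n,q)$ only $q=0$ survives, so the whole odd part $\tgApq{n}{odd}(M,E)$ carries no harmonic forms while the even part reduces to the holomorphic piece in $\tgApq{n}{0}(M,E)$.

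Next I would feed this into the local index. Writing $\iota_{D}=\mu_{D}-\mu_{D^{*}}$ leafwise, the fact that all odd-$q$ harmonic forms vanish means $\ker D^{*}=0$, so $\mu_{D^{*}}=0$ and $\iota_{D}=\mu_{D}$ is a nonnegative tangential measure. Integrating against the nonnegative transverse measure $v$ gives $\int\iota_{D}\,\diff v\geq 0$, exactly the chain of inequalities (\ref{EQ-foliated manifold-analytic index for p=1}) established in the $k=1$ case. Combined with Theorem \ref{thm:The Riemann–Roch formula for regular foliation}, this yields $\int(-1)^{k}\tgcherncharact{\dualframebd{M}^{(n,0)}}\tgcherncharact{E}\tgTdclass{M}\,\diff v\geq 0$, and the contrapositive is the assertion of the proposition.

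The main obstacle I anticipate is the passage from the \emph{global} vanishing of Corollary \ref{cor-vanishign theorem for Dolbeault}, which is stated for smooth sections over the compact base $M$ with respect to the transversal-density inner product, to the \emph{leafwise} statement $\ker D^{*}=0$ that is actually needed to conclude $\mu_{D^{*}}=0$. I would resolve this by observing that the Bochner–Kodaira–Nakano identity underlying the vanishing theorem is pointwise, so the curvature-positivity estimate holds on each leaf and annihilates $L^{2}$-harmonic $(n,q)$-forms with $q\geq 1$ leaf by leaf; for noncompact leaves one argues directly from the uniform positivity estimate rather than from compactness, and then checks that the resulting $\mu_{D^{*}}$ integrates to zero against $v$. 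Once this analytic point is secured, the remaining characteristic-class bookkeeping is purely formal, mirroring the classical Riemann–Roch computation and the $p=1$ case verbatim.
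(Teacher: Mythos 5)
Your proposal follows essentially the same route as the paper's proof: invoke Corollary \ref{cor-vanishign theorem for Dolbeault} at bidegrees $(n,q)$, $q\geq 1$, to conclude that for a positive line bundle the analytic index of $\tgpartialbar+\tgpartialbar^{*}$ reduces to the nonnegative measure coming from the $(n,0)$-kernel, then use Theorem \ref{thm:The Riemann–Roch formula for regular foliation} to transfer this to the topological side; the paper phrases this as a contradiction and you phrase it as a contrapositive, which is the same argument. The one substantive difference is that you explicitly flag and propose to repair the passage from the global (Lie algebroid) vanishing statement to the leafwise vanishing of $L^{2}$-harmonic forms needed for $\mu_{D^{*}}=0$ --- a point the paper's proof silently elides --- which is added rigor rather than a different approach.
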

\begin{proof}
The condition given in the statement means that the topologcial index
\begin{align*}
    \int
    (-1)^{k}
    \tgcherncharact{\dualframebd{M}^{(n,0)}}
    \tgcherncharact{E}
    \tgTdclass{M}
    \diff v
    <0
\end{align*}
is negative. However, the vanishing theorem states that the analytical index of a tangential Dolbeault operator on a positive line bundle is
\begin{align*}
    \int 
    \iota_{
        \tgpartialbar+\tgpartialbar^{*}:
        \tgApq{n}{even}(M,E)\to\tgApq{n}{odd}(M,E)
        }
    \diff v
    =
    \int
    \mu_{
        (\tgpartialbar+\tgpartialbar^{*})^{*}:
        \tgApq{n}{0}(M,E)\to\tgApq{n}{1}(M,E)
        }
    \diff v
    \geq 0
    .
\end{align*}
We reach a contradiction. Thus, $E$ cannot be positive.
\end{proof}

% The manifold $M$ is also a $S$-fiber bundle since the group action of $\fungroup{X}$ on $S$ is free and properly discontinuous. Each leaf intersects with fibers transversely. So we can fix an $x_0\in X$, and define the set
% \begin{align*}
%     T:=\{x_0\}\times_{\fungroup{X}} S
% \end{align*}
% to be a complete transversal of $M$. Then any measure $v$ on $S$ induces a transverse measure on $M$.

% Recalling Section \ref{sec-cpl transversal and inv trans measure}, the partition function $f:M\to T$ is a Borel function (not necessarily being continuous) such that $f([(x,s)])$ and $[(x,s)]$ are on the same leaf. So we can define
% \begin{align*}
%     f:
%     [(x,s)]=[(x',s')]
%     \mapsto
%     [(x_0,s')]
% \end{align*}
% for all $[(x,s)]\in(\widetilde{X}\times_{\fungroup{X}} S)$, where $[(x',s')]$ and $[(x_0,s')]$ are all on the leaf $\leafat{s'}$. The rest of the domain is $(\widetilde{\partial F}\times S^{1})/\Gamma$, which is a measure zero set, thus the definition of partition function on it will not impact on the integration.

\subsection{An example of K\"{a}hler foliated manifolds}
Let $X$ be a compact Riemann surface and $\widetilde{X}$ be its universal covering space, which is the upper half complex plane. Let $S$ be a manifold with a homomorphism $\phi:\fungroup{X}\to \mathrm{Diff}(S)$, where $\mathrm{Diff}(S)$ is the space of diffeomorphism on $S$. Then, we construct the manifold
\begin{align*}
    M=\widetilde{X}\times_{\fungroup{X}} S
\end{align*}
as the quotient of $\widetilde{X}\times S$ by identifying $
\big(x,s\big)
\sim
\big(\gamma\cdot x, \phi(\gamma) (s) \big)
$ for any $\gamma\in\fungroup{X}$, where the action of $\fungroup{X}$ on $\widetilde{X}$ is from the deck transformation.

The manifold $M$ is foliated by leaves
$
    \big\{
    \leafat{s}
    =
    \widetilde{X}\times_{\fungroup{X}} \{s\}
    \big\}_{s\in S}
$. The quotient $\widetilde{X}\times_{\fungroup{X}} \{s\}$ is equal to $\widetilde{X}/\mathrm{Stab}\fungroup{X}(s)$, where $\mathrm{Stab}\fungroup{X}(s)$ is the stabilizer of $\fungroup{X}$ at $s$. Thus, all leaves form a collection of K\"{a}hler manifolds whose K\"{a}hler structures are inherited from the upper half complex plane. A leaf is compact when its corresponding stabilizer group is equal to the fundamental group of a compact Riemann surface.

\bibliographystyle{unsrtnat}
\bibliography{references}  %%% Uncomment this line and comment out the ``thebibliography'' section below to use the external .bib file (using bibtex) .

%%% Uncomment this section and comment out the \bibliography{references} line above to use inline references.
% \begin{thebibliography}{1}

% 	\bibitem{kour2014real}
% 	George Kour and Raid Saabne.
% 	\newblock Real-time segmentation of on-line handwritten arabic script.
% 	\newblock In {\em Frontiers in Handwriting Recognition (ICFHR), 2014 14th
% 			International Conference on}, pages 417--422. IEEE, 2014.

% 	\bibitem{kour2014fast}
% 	George Kour and Raid Saabne.
% 	\newblock Fast classification of handwritten on-line arabic characters.
% 	\newblock In {\em Soft Computing and Pattern Recognition (SoCPaR), 2014 6th
% 			International Conference of}, pages 312--318. IEEE, 2014.

% 	\bibitem{hadash2018estimate}
% 	Guy Hadash, Einat Kermany, Boaz Carmeli, Ofer Lavi, George Kour, and Alon
% 	Jacovi.
% 	\newblock Estimate and replace: A novel approach to integrating deep neural
% 	networks with existing applications.
% 	\newblock {\em arXiv preprint arXiv:1804.09028}, 2018.

% \end{thebibliography}

\end{document}